\setlist[enumerate, 1]{label=(\roman*)}
\def\centerarc[#1](#2)(#3:#4:#5);%
\DeclareMathOperator{\SL}{SL} 
\DeclareMathOperator{\GL}{GL}
\newcommand{\Id}{{\rm Id}}
\newcommand{\Il}{{\rm I}\ell}
\newcommand{\IA}{{\rm IA}}
\newcommand{\IV}{{\rm IV}}
\newcommand{\isin}{{\rm Isin}}
\newcommand{\Span}{{\rm span}}
\newcommand{\ZZ}{\mathbb{Z}}
\definecolor{myblue}{rgb}{0.11,.19,.8}
\definecolor{myred}{rgb}{0.8,.014,.04}
\definecolor{mygreen}{rgb}{0.1,.64,.1}
\newtheorem{theorem}{Theorem}[section]
\newtheorem{proposition}[theorem]{Proposition}
\theoremstyle{definition}
\newtheorem{definition}[theorem]{Definition}
\newtheorem{example}[theorem]{Example}
\newtheorem{remark}[theorem]{Remark}
\def\@captionfont{\normalfont\footnotesize}\makeatother
\title{Klein-Arnold tensegrities}
\author{Oleg Karpenkov, 
Fatemeh Mohammadi, 
Christian M{\"u}ller and 
Bernd Schulze}
\begin{document}

\maketitle

\begin{abstract} 
In this paper, we introduce new classes of infinite and combinatorially periodic tensegrities, derived from algebraic multidimensional continued fractions in the sense of F.~Klein. We describe the stress coefficients on edges through integer invariants of these continued fractions, as initiated by V.I.~Arnold, thereby creating a novel connection between geometric rigidity theory and the geometry of continued fractions. Remarkably, the new classes of tensegrities possess rational self-stress coefficients. To establish the self-stressability of the frameworks, we present a projective version of the classical Maxwell-Cremona lifting principle, a result of independent interest.
\end{abstract}

{\hypersetup{linkcolor=black}

{\tableofcontents}}

\section*{Introduction}

The goal of this paper is to introduce and study new types of tensegrities \cite{conten,connellyguest,connix,conwhi} with rational stress coefficients that arise from a modified Maxwell-Cremona projection of periodic Klein continued fractions (Theorems~\ref{thm:self-stress} and~\ref{thm:stresscoefprop}). It turns out that these self-stresses have a natural expression in terms of invariants of integer geometry (see Proposition~\ref{integer-propos}). We thereby create  a link between the areas of (geometric) rigidity theory, graphic statics, lattice geometry and the geometry of continued fractions.

Rigidity theory 
is concerned with  analysing the rigidity and flexibility of discrete configurations of  primitive geometric objects (points, lines, etc.) that satisfy a set of geometric constraints  (fixed lengths, directions, etc.). 
See \cite{connellyguest,graver} for introductions to this topic and \cite{berndwalter} for a
 summary of recent results.  
 The paradigmatic example is that 
of a \emph{bar-joint framework} in Euclidean $n$-space, $\mathbb{R}^n$, which is defined as %a pair 
$G(p)$, 
where $G=(V,E)$ is a graph (with $E$ and $V$ representing fixed length bars and freely rotating joints, respectively)
 and $p:V\to \mathbb{R}^n$ is a map that describes the geometric placement of the joints. 
The theory has a rich history, which can be traced back to  work of L.~Euler, A.-L.~Cauchy and J.C.~Maxwell.  Driven by numerous modern  applications in fields as diverse as engineering, robotics, structural biology, crystallography, 
material science, and CAD, and by accompanying significant mathematical breakthroughs, interest and developments in the
rigidity of bar-joint frameworks and related geometric constraint systems have
increased rapidly in recent decades.

Methods for recognising whether a bar-joint framework is rigid often rely on the concept of \emph{first-order} or 
\emph{infinitesimal rigidity} \cite{connellyguest,graver,WW}. This is a linearised version of rigidity that is equivalent to \emph{static rigidity}, a fundamental
notion in structural engineering. Loosely speaking, a bar-joint framework is \emph{statically rigid} if every equilibrium load (that is, any assignment
 of forces to the joints that do not induce a rigid-body motion of the framework) can be resolved by forces
 (tensions or compressions) within the bars. If there is a set of non-zero forces within the bars that are in equilibrium in the absence of
any external load, then the framework is said to have a non-trivial \emph{equilbrium stress} or \emph{self-stress}, and is
often referred to as a \emph{tensegrity}. Self-stresses indicate
redundancies in the bar constraints and play a fundamental role in the rigidity analyses of bar-joint frameworks/tensegrities.

Based on work by W.J.M.~Rankine \cite{ran} and P.~Culmann \cite{cul}, among others, it was discovered by J.C.~Maxwell and L.~Cremona in the 1860s that for a plane framework, there is an equivalence between self-stresses,
reciprocal frameworks on the dual graph, and vertical  liftings of the original framework to polyhedral surfaces \cite{maxwell1864xlv,maxwell1870reciprocal,cremona1872figure}. See also \cite{WW82,CW93}. This provides a tool to describe
and analyse self-stresses (and infinitesimal motions) in terms of polyhedral
liftings, which has found important applications in discrete and computational
geometry (see
e.g.~\cite{Con03,crapo1994spaces,hopcroft1992paradigm,richter2006realization,aschulz1}).
This ``Maxwell-Cremona correspondence" and related
geometric methods from the area of ``graphic statics" have also received much attention from the engineering and architecture communities lately,
as they provide visual and intuitive approaches for the design, analysis, and optimisation of structures via modern computational tools (see e.g.~\cite{zal,begh,block,mbmm}).

Since symmetry and periodicity are ubiquitous in both man-made and natural structures, the last two decades have seen an explosion of results regarding the  infinitesimal/static rigidity  of symmetric and periodic frameworks \cite{SWbook}, and the design of symmetric structures with certain rigidity properties \cite{connellysw,kst21}. Here we will introduce tensegrities  with a new type of hyperbolic periodic symmetry in the plane. 

To this end we employ a multidimensional version of continued fractions
introduced by F.~Klein in 1895~\cite{Klein1895, Klein1896}. Despite its name,
Klein continued fractions are actually not numbers but rather  convex
polyhedral surfaces naturally defined by hyperbolic matrices (with real
distinct eigenvalues). This already suggests their possible applications in
rigidity theory via the  Maxwell-Cremona correspondence. Dirichlet's theorem on
units (see e.g.~\cite{BS66}) implies that if a hyperbolic matrix is algebraic
(i.e., if it is in $\GL(n,\ZZ)$) then the corresponding Klein continued
fractions are combinatorially $(n{-}1)$-periodic.

Due to the computational complexity, the first periods of Klein periodic continued fractions only appeared a hundred years later when the computational power had become sufficiently strong. Around 1990, V.I.~Arnold formulated several milestone problems and conjectures on the computation of periods of such continued fractions, their combinatorics and their dynamical properties (see \cite{Arnold1998}). The first examples
were computed by E.~Korkina~\cite{Kor95}, A.D.~Bruno and V.I.~Parusnikov~\cite{BP94,Par95}, O.~Karpenkov~\cite{Kar04}, among others.
For further details, including algorithms and discussions, we refer the reader to~\cite{karpenkov-book}.

To obtain our new class of ``Klein-Arnold tensegrities" from Klein continued fractions, we  describe a projective version of the classical  Maxwell-Cremona correspondence which uses central, rather than orthogonal, projections. 
A fundamental fact, which was already known to scientists and mathematicians such as Rankine, Maxwell and Klein, is that
static rigidity  is projectively invariant \cite{CW1982,Izmestiev1,NSW}. In particular, if two frameworks are projectively equivalent,
 then the spaces of their self-stresses have the same dimension. The values of their stress coefficients, however, can be different.
Therefore,  it is natural to consider force loads as objects of projective geometry (rather than affine geometry). This is described in Section~\ref{sec:projstatics}. In Section~\ref{sec:proofss} (proof of Theorem~\ref{thm:self-stress}) we then establish 
an explicit formula for the  stress coefficients of the  tensegrities obtained from centrally projecting a polyhedral surface to an affine plane. This is of interest with regards to applications of graphic statics in its own right. As we will show in Section~\ref{sec:projective-stress},  the stress coefficients  can be expressed in terms of invariants of integer geometry. For projections of Klein continued fractions, we obtain tensegrities with the remarkable property that  they have doubly periodic rational self-stresses (Theorem~\ref{thm:stresscoefprop}).

Finally, we would like to mention  a recent paper by F.~Mohammadi and X.~Wu~\cite{mohammadi2024rational}
providing a link between rigidity theory and Chow cohomologies of toric varieties. Our paper provides a combinatorial basis for the extension of this link.

We collect classical notions of geometric rigidity theory in Section~\ref{Preliminaries}.
Further, in Section~\ref{sec:self-stresses} we define self-stresses for projections of polyhedral surfaces to affine planes.
In Section~\ref{Projective-proof} we provide the proof of Theorem~\ref{thm:self-stress}.
In the proof, we use the notion of projective self-stresses; these self-stresses arising from Klein continued fractions are periodic with respect to a Dirichlet group action. 
In Section~\ref{sec:projective-stress},
we discuss the necessary definitions of integer geometry and describe the stress coefficients  in terms of invariants of integer geometry. We discuss multidimensional continued fractions in the sense of Klein in Section~\ref{sec:self-stresses-klein-sails}. We explain their combinatorial periodicity in the algebraic case and show how to construct corresponding periodic self-stresses in the plane. We illustrate this construction with a couple of examples.

\section{Preliminaries from geometric rigidity theory and graphic statics} 
\label{Preliminaries}

We collect some basic notions from geometric rigidity theory and graphic
statics. For further details, see
e.g.~\cite{connellyguest,connelly1996second,white1983algebraic,WW}.

Throughout the paper, we let $G = (V, E)$ be a graph without loops and without
multiple edges where $V = \{v_1, v_2, \ldots\}$ denotes the set of
\emph{vertices} and $E$ the set of \emph{edges} of $G$. An edge joining the
vertices $v_i$ and $v_j$ is denoted by $v_i v_j$.
A \emph{(bar-joint) framework} $G(p)$ in $\mathbb{R}^n$ consists of a graph $G =
(V, E)$ and an injective map $p : V \to \mathbb{R}^n$. We often denote $p(v_i)$ by
$p_i$. We say that there is an \emph{edge} between $p_i$ and $p_j$ if $v_i v_j$
is an edge of $G$ and denote it by $p_i p_j$. 
      
A \emph{stress} on a framework $G(p)$ is a map $\omega:E\rightarrow\mathbb{R}$, where $\omega(v_i v_j)$ is usually denoted by $\omega_{ij}$. Note that $\omega_{ij}=\omega_{ji}$. The scalars $\omega_{ij}$ are called \emph{stress coefficients} (or \emph{force-densities} by structural engineers) and represent \emph{tensions} and \emph{compressions} in the corresponding edges depending on their sign.
A stress $\omega$ is called an \emph{equilibrium stress} or \emph{self-stress} if for every $v_i \in V$, we have
\begin{equation*}
  \sum\limits_{\{v_j \mid v_j \neq v_i\}} \omega_{ij} (p_i - p_j) = 0.
\end{equation*}
%Here $p_i - p_j$ denotes the vector from the point $p_j$ to the point $p_i$.

A pair $(G(p), w)$ is called a \emph{tensegrity} if $\omega$ is a self-stress of the framework $G(p)$.
A tensegrity $(G(p), w)$ (or self-stress $\omega$) is said to be \emph{non-zero} if there exists an edge $v_i v_j$ of the framework that has a non-zero stress coefficient $\omega_{ij} \neq 0$.

We say that a graph $G$ is \emph{polyhedral} if it is vertex 3-connected and planar (and hence is the graph of a $3$-dimensional convex polytope, by Steinitz's Theorem \cite{steinitz,gru}). Note that, as such, %the graph 
$G$ has a uniquely determined set of faces, namely the set of induced non-separating cycles of $G$. 

For a polyhedral graph $G$, a \emph{(vertical) lifting} or \emph{Maxwell-Cremona lifting} of a framework $G(p)$ in the plane is a function $h:V\to \mathbb{R}$ that assigns to each vertex $p_i$ of $G(p)$ the additional $z$-coordinate (or ``height'') $z_i=h(p_i)$, so that in the resulting $3$-dimensional framework, every face of $G$ lies within a plane.
The following remarkable correspondence was found by L.~Cremona and
J.C.~Maxwell~\cite{maxwell1870reciprocal,cremona1872figure} in the 19th
century of which a complete proof was given in~\cite{WW82} (see also~\cite{CW93}). 

\begin{theorem}[\textbf{Maxwell-Cremona correspondence}]
  \label{thm:maxwell}
  Let $G$ be a polyhedral graph, $G(p)$ be a framework in the $xy$-plane, and
  $f$ be a designated face of $G$. Then there is a one-to-one correspondence
  between self-stresses $\omega$ of $G(p)$ and vertical liftings of $G(p)$ in
  $\mathbb{R}^3$, where the face $f$ lies in the $xy$-plane.
\end{theorem}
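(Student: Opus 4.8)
The plan is to build explicit maps in both directions and show that they are mutually inverse, with planar duality doing the essential bookkeeping. The bridge between the two notions is the \emph{gradient} of the plane sitting over each face. Given a lifting $h$, each face $F$ of $G$ lies in the graph of an affine function $\ell_F(x,y) = g_F\cdot(x,y) + c_F$ with gradient $g_F \in \mathbb{R}^2$, and I normalise so that the designated face $f$ has $g_f = 0$ and $\ell_f \equiv 0$, i.e.\ $f$ sits in the $xy$-plane. For an edge $p_ip_j$ shared by faces $F$ and $F'$, the two planes agree along the entire line through $p_i$ and $p_j$, so $\ell_F - \ell_{F'}$ vanishes there; hence the gradient jump $g_F - g_{F'}$ is orthogonal to $p_i - p_j$ and can be written uniquely as $\omega_{ij}\, J(p_i - p_j)$, where $J$ denotes rotation by $90^\circ$. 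This scalar $\omega_{ij}$ is the stress coefficient I attach to the edge, and it defines the map from liftings to stresses.

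To verify that $\omega$ is a self-stress, I fix a vertex $v_i$ and list the faces $F_1,\dots,F_k$ incident to it in cyclic order, so that $F_\ell$ and $F_{\ell+1}$ meet along an edge $p_ip_{j_\ell}$. The gradient jumps telescope around the star, $\sum_\ell (g_{F_\ell} - g_{F_{\ell+1}}) = 0$, simply because the sum returns to the gradient it started from. Substituting $g_{F_\ell} - g_{F_{\ell+1}} = \omega_{i j_\ell} J(p_i - p_{j_\ell})$ and using that $J$ is invertible yields $\sum_\ell \omega_{ij_\ell}(p_i - p_{j_\ell}) = 0$, which is exactly the equilibrium condition at $v_i$.

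For the converse direction I reconstruct the lifting from a given self-stress $\omega$ by a two-step integration over the dual graph $G^\ast$. First I recover the gradients: set $g_f = 0$ and define $g_F$ by summing $\omega_{ij}\,J(p_i - p_j)$ along any dual path from $f$ to $F$. This is well defined precisely when the jumps cancel around every cycle of $G^\ast$; by planar duality the cycle space of $G^\ast$ is spanned by the vertex-stars of $G$, and the sum around the star of $v_i$ equals $J\!\left(\sum_j \omega_{ij}(p_i - p_j)\right) = 0$ by the self-stress condition, so path-independence holds. With the gradients fixed, I recover the constants by integrating the differences $c_F - c_{F'} = -(g_F - g_{F'})\cdot p_i$ over $G^\ast$, evaluated at a shared vertex $p_i$ of the common edge; this second integration is automatically consistent, since telescoping the gradients around each vertex-star forces these constant-jumps to cancel as well. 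The resulting affine functions then agree on shared edges, so $h(p_i) := \ell_F(p_i)$ is independent of the face $F \ni p_i$ and gives a lifting with $f$ in the $xy$-plane.

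Finally I would observe that the two constructions are inverse by design: reading the gradient jumps off the reconstructed planes returns the original $\omega$, while integrating the jumps of a given lifting returns its gradients up to the normalisation pinned down by $f$. The main obstacle, and the only place where polyhedrality is genuinely used, is the well-definedness of the dual integration: one must know that $G$ is planar and $3$-connected so that $G^\ast$ is a bona fide planar dual whose cycle space is generated exactly by the vertex-stars of $G$. Establishing this combinatorial fact, so that the self-stress relations at the vertices constitute precisely the obstruction that must vanish for the integration to close up, is the technical heart of the argument, and is where I would invoke Steinitz's theorem and the standard theory of planar duality.
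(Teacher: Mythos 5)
The paper itself does not prove this statement: Theorem~\ref{thm:maxwell} is the classical Maxwell--Cremona correspondence, quoted with a citation to Whiteley's complete proof in~\cite{WW82} (see also~\cite{CW93} and Section~13 of~\cite{richter2006realization}). Your argument --- encoding each face plane by its gradient, reading the stress coefficient off the $90^\circ$-rotated gradient jump across each edge, getting equilibrium from telescoping around vertex stars, and conversely integrating the jumps over the dual graph, whose cycle space is generated by the duals of vertex stars --- is exactly the standard proof found in those references, and it is correct. Two points you leave implicit but would need in a full write-up: a fixed orientation convention (say, $F$ the left face and $F'$ the right face of the oriented edge $\overrightarrow{p_i p_j}$) so that $\omega_{ij}=\omega_{ji}$ is well defined and the telescoping signs come out consistently; and the mild non-degeneracy assumption (no face of $G(p)$ with collinear vertices) needed so that each lifted face determines a unique non-vertical plane, an assumption the paper's statement also glosses over. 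Neither affects the substance of the argument.
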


Note that while $G$ is planar, the framework $G(p)$ may have crossing edges. It turns out that the  sign of the stress coefficient of an edge is related to the connectivity of the two faces along the corresponding edge of the  polyhedral surface in $3$-space:  positive coefficients correspond to convex  and negative coefficients to concave dihedral angles. 

Let $P$ be a given $3$-polytope and let $f$ be a designated face of $P$ with the
property that the orthogonal projection of $P$ onto the plane of $f$ maps every
point of $P$ that is not in $f$ to the interior of $f$. Then Theorem~3.3 in Lov\'asz's book~\cite{lov} shows how to obtain the stress-coefficients of the
corresponding self-stressed framework. Such a framework is also known as a
``rubber-band representation'' of the polytope, based on Tutte's spring theorem
\cite{tutte}. (Conversely, the lifting may be constructed from the stress
coefficients, as was known by Maxwell and Cremona. See Section~13 of
\cite{richter2006realization} for a simple algorithm.) In
Section~\ref{sec:self-stresses}, we will provide a corresponding formula for the
stress coefficients arising from central, rather than orthogonal, projections
of polyhedra.

\section{Self-stresses defined by projections of polyhedral surfaces}
\label{sec:self-stresses}

In this section, we define self-stresses for central projections of polyhedral surfaces to affine planes.
In what follows we will only work in $\mathbb{R}^3$.
First of all we define projective lifting coefficients on edges which will
play a central role in the construction of self-stresses.
Let us denote by $\Span(W)$ the minimal affine subspace containing any collection $W$ of subsets of $\mathbb{R}^n$.

\begin{definition}
  \label{def:stresscoef}
  Let $p_1, \ldots, p_4$ be four points in $\mathbb{R}^3$ such that the affine spaces
  $\pi_1 = \Span(p_1, p_2, p_3)$ and $\pi_2 = \Span(p_1, p_2, p_4)$ are both
  $2$-dimensional and do not pass through the origin $O$. %, respectively. 
  Then we call the expression 
  \[
    \omega(p_1,p_2;p_3,p_4)
    =
    \frac{\det(p_2-p_1,p_3-p_1,p_4-p_1)}{\det(p_1,p_2,p_3) \det(p_1,p_2,p_4)}
  \] 
  a \emph{projective lifting coefficient} on $p_1 p_2$ induced by
  $\pi_1$ and $\pi_2$. 
  Note that $\omega(p_1,p_2;p_3,p_4) = -\omega(p_1,p_2;p_4,p_3)$.
\end{definition}

The following statement follows directly from the definition by applying elementary matrix manipulations.

\begin{proposition}
  \label{prop-does-not-depend}
  The projective lifting coefficient $\omega(p_1,p_2;p_3,p_4)$ does not depend
  on the choice of points $p_3\in \pi_1\setminus \pi_2$ and
  $p_4 \in \pi_2 \setminus \pi_1$.
\end{proposition}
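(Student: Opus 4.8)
The plan is to exploit the product structure of the expression together with the multilinearity and antisymmetry of the determinant, handling the dependence on $p_3$ and on $p_4$ one at a time. Since $\pi_1$ and $\pi_2$ are distinct $2$-dimensional planes both containing $p_1$ and $p_2$, their intersection is exactly the line through $p_1$ and $p_2$; hence the condition $p_3 \in \pi_1 \setminus \pi_2$ means precisely that $p_3$ lies in the plane $\pi_1$ but off the line $p_1 p_2$. The key observation is that, when $p_3$ is moved within $\pi_1$, both the numerator $\det(p_2-p_1,p_3-p_1,p_4-p_1)$ and the factor $\det(p_1,p_2,p_3)$ of the denominator rescale by the \emph{same} nonzero scalar, so that this scalar cancels in the ratio.

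To make this precise, I would first fix $p_1,p_2,p_4$ and replace $p_3$ by an arbitrary $p_3' \in \pi_1 \setminus \pi_2$. Writing $p_3'$ as an affine combination $p_3' = \lambda_1 p_1 + \lambda_2 p_2 + \lambda_3 p_3$ with $\lambda_1+\lambda_2+\lambda_3 = 1$, the requirement that $p_3'$ not lie on the line $p_1 p_2$ is equivalent to $\lambda_3 \neq 0$. Using $p_3' - p_1 = \lambda_2(p_2-p_1) + \lambda_3(p_3-p_1)$ and the antisymmetry of the determinant (the $p_2-p_1$ term drops out), I obtain
\[
  \det(p_2-p_1,p_3'-p_1,p_4-p_1) = \lambda_3\,\det(p_2-p_1,p_3-p_1,p_4-p_1).
\]
On the other hand, expanding $\det(p_1,p_2,p_3')$ by multilinearity in its last column and discarding the vanishing terms $\det(p_1,p_2,p_1)$ and $\det(p_1,p_2,p_2)$ gives
\[
  \det(p_1,p_2,p_3') = \lambda_3\,\det(p_1,p_2,p_3).
\]
Since $\lambda_3 \neq 0$, the common factor cancels and $\omega(p_1,p_2;p_3',p_4) = \omega(p_1,p_2;p_3,p_4)$, which proves independence of the choice of $p_3$.

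For the dependence on $p_4$ I would run the identical computation with the roles of $p_3$ and $p_4$ interchanged, which shows that moving $p_4$ within $\pi_2 \setminus \pi_1$ rescales both the numerator and the factor $\det(p_1,p_2,p_4)$ by a common nonzero scalar that again cancels; alternatively one may simply invoke the antisymmetry relation $\omega(p_1,p_2;p_3,p_4) = -\omega(p_1,p_2;p_4,p_3)$ recorded in Definition~\ref{def:stresscoef} to transfer the first invariance to the fourth slot. Combining the two invariances yields the claim. I do not expect any genuine obstacle: the only points requiring care are verifying that the scaling factor is indeed nonzero (guaranteed by $p_3' \notin \pi_2$, respectively $p_4' \notin \pi_1$) and that the numerator determinant and the relevant denominator factor scale by the \emph{same} factor, both of which are immediate consequences of multilinearity and antisymmetry of the determinant.
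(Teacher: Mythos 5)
Your proof is correct and matches the paper's approach: the paper gives no details at all, stating only that the proposition ``follows directly from the definition by applying elementary matrix manipulations,'' and your argument — writing $p_3'$ as an affine combination, using multilinearity and antisymmetry to extract the common nonzero factor $\lambda_3$ from both the numerator and the factor $\det(p_1,p_2,p_3)$, then transferring the invariance to the fourth slot via the antisymmetry relation — supplies exactly those manipulations. Nothing is missing; in particular you correctly identify $\pi_1\cap\pi_2$ as the line through $p_1p_2$ and note that $\lambda_3\neq 0$ guarantees both the cancellation and the nonvanishing of the new denominator factor.
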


Our next goal is to adapt Definition~\ref{def:stresscoef} to the edges of
oriented polyhedral surfaces. We start with the definition of projectively
non-degenerate surfaces and admissible projections thereof.

\begin{definition}
  Let $S(p)$ be an oriented polyhedral 2-surface in $\mathbb{R}^3$. 
  We say that $S(p)$ is \emph{projectively non-degenerate} if the faces of
  $S(p)$ span affine 2-spaces that do not include the origin $O$. 
  We say that an affine plane $\pi$ is a \emph{proper projection plane} for
  $S(p)$ if 
  \begin{itemize}
    \item $\pi$ does not contain the origin $O$;
    \item the lines $\Span(O, p_i)$ are not parallel to $\pi$ for all vertices $p_i$. %\comment[id=FM]{What are $p_i$'s?}
  \end{itemize}
\end{definition}

There is a natural framework induced by projecting the 1-skeleton of a  polyhedral surface to a proper projection plane.

\begin{definition}
  \label{def-betas}
  For a projectively non-degenerate polyhedral surface $S(p)$ and a proper
  projection plane $\pi$, we consider the framework with
  \begin{itemize}
    \item vertices $\bar p_i = \Span(O, p_i) \cap \pi$;
    \item edges $\bar p_i \bar p_j$
      corresponding to the edges $p_ip_j$ in $S(p)$.
  \end{itemize}
  We call it the \emph{projection-framework} of $S(p)$ to the proper projection
  plane $\pi$ and denote it by $G_{S}(\bar p)$ (see Figure~\ref{fig:projection}
  left).
  We call the scaling factor $\beta_i$ of the parallel vectors 
  \begin{equation*}
    p_i - O = \beta_i \cdot (\bar p_i - O),
  \end{equation*}
  the \emph{projection-coefficient} for the vertex $p_i$.
\end{definition}

\begin{center}
\begin{figure}[htp]
  \begin{overpic}[width=.5\textwidth]{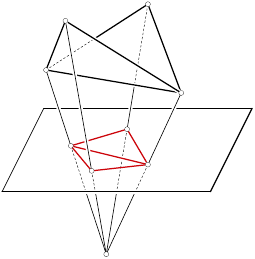}
    \put(14,75){$p_1$}
    \put(72,64){$p_2$}
    \put(23,94){$p_3$}
    \put(59,98){$p_4$}
    \put(22,43){\color{myred}$\bar p_1$}
    \put(59,35){\color{myred}$\bar p_2$}
    \put(30,33){\contour{white}{\color{myred}$\bar p_3$}}
    \put(51,49){\color{myred}$\bar p_4$}
    \put(35,0){$O$}
    \put(92,54){$\pi$}
    \put(65,82){$S(p)$}
    \put(35,50){\small\color{myred}{$G_{S}(\bar p)$}}
  \end{overpic}
  \hfill
  \begin{overpic}[width=.4\textwidth]{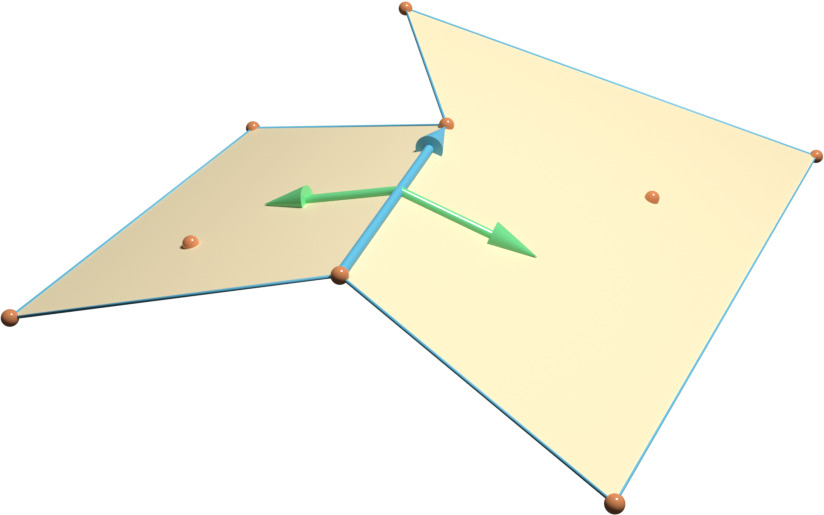} 
    \put(37,24){\small\contour{white}{$p_i$}}
    \put(56,46){\small\contour{white}{$p_j$}}
    \put(77,41){\small\contour{white}{$p_k$}}
    \put(17,30){\small\contour{white}{$p_l$}}
    \put(36,33){\scriptsize\rotatebox{6}{\contour{white}{left}}}
    \put(49,34){\scriptsize\rotatebox{-27}{\contour{white}{right}}}
  \end{overpic}
  \caption{\emph{Left:} Central projection of a polyhedral surface $S(p)$ with
  vertices $p_i$ to a proper projection plane $\pi$. The projected framework
  $G_S(\bar p)$ with vertices $\bar p_i$ is a tensegrity.
  \emph{Right:} Two faces adjacent to an oriented edge $\overrightarrow{p_i
  p_j}$. The given orientation of the surface specifies a left and right face
  with respect to an oriented edge.
  The projective lifting coefficient $\omega_{ij}$ is determined per edge
  independent from the orientation of the edge.}
  \label{fig:projection}
\end{figure}
\end{center}

In the next two definitions we introduce a natural projective version of Maxwell-Cremona self-stresses.

\begin{definition}
  \label{oriented-def-stress}
    Consider a projectively non-degenerate oriented polyhedral surface $S(p)$.
    For an oriented edge $\overrightarrow{p_i p_j}$ the orientation of the
    polyhedral surface determines a left and a right face adjacent to this
    edge. Let us choose a point $p_k$ on the right face and a point $p_l$ on
    the left face (see Figure~\ref{fig:projection} right).
    Then we say that the projective lifting coefficient
    $\omega_{ij} = \omega(p_i,p_j;p_k,p_l)$ is \emph{associated} to $S$ and
    we denote the collection of all $\omega_{ij}$ by $\omega_S$.
\end{definition}

\begin{definition}
  \label{def-omega-bar}
  Consider a projectively non-degenerate orientable polyhedral surface $S(p)$,
  a proper projection plane $\pi$, and the projection-framework $G_{S}(\bar p)$
  in $\pi$ together with the projection-coefficients $\beta$. We define the following notions:
  \begin{itemize}
    \item For each edge $\bar p_i\bar p_j$ of $G_{S}(\bar p)$ we set
      the coefficient 
      \begin{equation}
        \label{eq:projection-stress}
        \bar\omega_{ij}=\beta_i\beta_j\omega_{ij}
      \end{equation}
      and call it the \emph{projection-stress coefficient} of the edge.
    \item The collection $\bar\omega_S$ of the projection-stress
      coefficients of all edges is called the \emph{projection-stress} on the
      framework $G_S(\bar p)$.
    \item We also say that the surface $S(p)$ itself is a \emph{projective
      lifting} of the framework $G_S(\bar p)$.
  \end{itemize}
\end{definition}

\begin{remark}
  Note that in the case of the proper projection plane $\pi$ defined by the
  equation $z=1$, the projection-coefficient $\beta_i$ corresponding to vertex
  $p_i = (x_i, y_i, z_i)$ reads
  \begin{equation*}
    \beta_i=z_i.
  \end{equation*}
\end{remark}

Similar to the case of Maxwell-Cremona liftings, the stress coefficients defined above yield a self-stress.
We will prove the following theorem later in Section~\ref{Projective-proof}.

\begin{theorem}
  \label{thm:self-stress} 
  The projection-stress $\bar\omega_S$ is a self-stress of the framework
  $G_S(\bar p)$.
\end{theorem}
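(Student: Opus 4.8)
The plan is to verify the self-stress equation $\sum_j \bar\omega_{ij}(\bar p_i-\bar p_j)=0$ at each vertex $\bar p_i$ by pulling everything back to the surface $S(p)$ and reducing to a single vector identity on $S(p)$. Taking $O$ as the coordinate origin we have $p_i=\beta_i\bar p_i$, so
\[
  \bar\omega_{ij}(\bar p_i-\bar p_j)
  =\beta_i\beta_j\,\omega_{ij}\big(\beta_i^{-1}p_i-\beta_j^{-1}p_j\big)
  =\omega_{ij}\big(\beta_j\,p_i-\beta_i\,p_j\big),
\]
and the vertex equation becomes
\[
  \Big(\sum_j\beta_j\,\omega_{ij}\Big)p_i-\beta_i\sum_j\omega_{ij}\,p_j=0 .
\]
Writing $\pi=\{x:\phi(x)=1\}$ for the covector $\phi$ defining the proper projection plane (which exists because $O\notin\pi$), the condition $\bar p_i\in\pi$ gives $\beta_i=\phi(p_i)$. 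Hence it suffices to prove the single, manifestly $\pi$-independent claim
\[
  \sum_j\omega_{ij}\,p_j=\nu_i\,p_i\qquad\text{for some scalar }\nu_i.\qquad(\ast)
\]
Indeed, given $(\ast)$, linearity of $\phi$ yields $\sum_j\beta_j\omega_{ij}=\phi\big(\sum_j\omega_{ij}p_j\big)=\nu_i\,\phi(p_i)=\nu_i\beta_i$, so the vertex equation collapses to $\nu_i\beta_i\,p_i-\beta_i\nu_i\,p_i=0$.

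To prove $(\ast)$ I would pass to the dual picture. To each face $F$ of $S(p)$ I associate the unique covector $\ell_F$ with $\ell_F\equiv 1$ on the affine plane spanned by $F$; this is well defined precisely because $S(p)$ is projectively non-degenerate. The key lemma I would establish is the determinant identity
\[
  \omega_{ij}\,(p_i\times p_j)=\ell_R-\ell_L,
\]
where $\ell_R,\ell_L$ are the dual points of the right and left faces along the oriented edge $\overrightarrow{p_ip_j}$ (identifying covectors with vectors via the standard inner product). Both sides annihilate $p_i$ and $p_j$ — the right-hand side because $\ell_R,\ell_L$ both take the value $1$ on the two endpoints of the shared edge, the left-hand side by construction — so both are scalar multiples of $p_i\times p_j$, and matching the two multiples is a short computation using the expansion
\[
  \det(p_j-p_i,\,p_k-p_i,\,p_l-p_i)
  =\det(p_j,p_k,p_l)-\det(p_i,p_k,p_l)+\det(p_i,p_j,p_l)-\det(p_i,p_j,p_k)
\]
together with the Cramer formula $\ell_F=\det(a,b,c)^{-1}\big(b\times c+c\times a+a\times b\big)$ for a face through $a,b,c$. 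By Proposition~\ref{prop-does-not-depend} the left-hand side is independent of the auxiliary points $p_k,p_l$, so the identity is unambiguous.

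Finally I would sum this identity around the vertex $p_i$. Since $S(p)$ is an oriented $2$-surface, the faces incident to $p_i$ form a single cyclic sequence $F_1,\dots,F_m$, with $F_s$ bounded by the edges $p_iq_s$ and $p_iq_{s+1}$, and the induced boundary orientation makes each $F_s$ the right face of one incident edge and the left face of the next. Consequently the right-hand sides $\ell_R-\ell_L$ telescope to zero around the link of $p_i$, giving $\sum_j\omega_{ij}(p_i\times p_j)=p_i\times\big(\sum_j\omega_{ij}p_j\big)=0$, which is exactly $(\ast)$. I expect the main obstacle to be the key determinant identity: pinning down the correct normalisation and the right/left sign convention so that the telescoping closes up. Once that is in place, the reduction via $\phi$ and the cancellation around the vertex are immediate, and the argument simultaneously exhibits the dual points around each vertex as a closed reciprocal polygon — the projective counterpart of the classical Maxwell--Cremona reciprocal figure.
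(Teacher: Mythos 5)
Your proposal is correct, but it proves the key step by a genuinely different mechanism than the paper. Both arguments factor through the same intermediate statement: your claim $(\ast)$, that $\sum_j \omega_{ij}p_j$ is radial at each vertex, is precisely the paper's Theorem~\ref{thm:projselfstress} (the vanishing of $\sum_j \omega_{ij}\,dp_i\wedge dp_j$, which in $\mathbb{R}^3$ is equivalent to $p_i\times\sum_j\omega_{ij}p_j=0$), and your reduction via the covector $\phi$ with $\pi=\{\phi=1\}$, $\beta_i=\phi(p_i)$ is a compact substitute for the paper's argument that separates the component of the wedge product parallel to $\pi$. The divergence is in how the projective equilibrium is established. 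The paper argues by induction on the vertex valence $k$: the base case $k=3$ is a direct determinant computation, and the inductive step inserts an auxiliary point $q$ on the intersection line of two face planes, invoking Proposition~\ref{prop-does-not-depend} to split the vertex star into two smaller ones. You instead pass to the dual picture via the identity $\omega_{ij}(p_i\times p_j)=\ell_R-\ell_L$, and telescope around the oriented vertex link; I checked the identity, since both sides are multiples of $p_i\times p_j$ (as $\ell_R,\ell_L$ both equal $1$ on $p_i$ and $p_j$), and dotting with $p_k$ gives $\det(p_j-p_i,p_k-p_i,p_l-p_i)/\det(p_i,p_j,p_l)$ on each side, using your (correct) determinant expansion. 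Your route is non-inductive, treats all valences uniformly, and makes explicit the projective reciprocal figure -- the dual points $\ell_F$ closing up into a polygon around each vertex -- which is the honest projective analogue of the Maxwell--Cremona reciprocal and is never exhibited in the paper. What the paper's route buys in exchange is that it never needs the inner-product identification of covectors with vectors or cross products, staying in the exterior-algebra formalism, and it recycles Proposition~\ref{prop-does-not-depend} instead of introducing a new lemma. The one point you would still need to write out carefully is the orientation bookkeeping in the telescoping step: the consistent assignment that each face $F_s$ is the right face of one of its two edges at $p_i$ and the left face of the other, which is exactly what orientability of $S(p)$ guarantees and what makes the sum close up.
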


\begin{remark}
  Let us briefly discuss the asymptotic properties of this construction. Assume
  we have a trivalent vertex $p_1$ with adjacent edges $p_1p_2$, $p_1p_3$ and
  $p_1p_4$ (as in Definition~\ref{def:stresscoef}), an affine plane (proper
  projection plane) $\pi$ and a vector $\ell$ that is not parallel to $\pi$.
  Denote by $A_{ijk}$ the oriented area of the projection of the triangle
  $p_ip_jp_k$ to the plane $\pi$ along $\ell$. Denote also by $V_{1234}$ the
  determinant $\det(p_{2}-p_{1},p_{3}-p_{1},p_{4}-p_{1})$.

  Consider the shifted configuration 
  \begin{equation*}
    p_{i,t}= p_i + t\ell
    \qquad \hbox{and} \qquad
    \pi_t=\pi+t\ell.
  \end{equation*}
  Then we have
  \begin{align*}
    \bar\omega_{12,t} 
    &=
    \beta_{1,t}\beta_{2,t} \cdot  \omega_{12,t}(p_{1,t},p_{2,t};p_{3,t},p_{4,t})
    \\
    &=
    \beta_{1,t}\beta_{2,t} \cdot \frac{\det(p_{2,t}-p_{1,t},p_{3,t}-p_{1,t},p_{4,t}-p_{1,t})}{\det(p_{1,t},p_{2,t},p_{3,t})\det(p_{1,t},p_{2,t},p_{4,t})}\\
    &=
    \frac{V_{1234}}{(t\cos(\ell,\pi) A_{123})(t\cos(\ell,\pi)A_{124})}
    +O(t^{-2})
    \\
    &=
    \frac{V_{1234}}{A_{123}A_{124}}
    \cdot 
    \frac{1}{t^2\cos^2(\ell,\pi)}
    +O(t^{-2}),
  \end{align*}
  while $t\to\infty$.
  Here the leading coefficient 
  \begin{equation*}
    \frac{V_{1234}}{A_{123}A_{124}}
  \end{equation*}
  defines a remarkable self-stress for the framework of an affine projection of
  a polyhedral surface along $\ell$ to the plane $\pi$. This self-stress is
  proportional  to the self-stress of the corresponding Maxwell-Cremona
  lifting. 
\end{remark}

\section{Projective version of Maxwell-Cremona liftings in $\mathbb{R}^3$;
proof of Theorem~\ref{thm:self-stress}}
\label{Projective-proof}

In this section, we work in a natural projectivisation of classical
statics, which is convenient to study
tensegrities~\cite{karpenkov2021combinatorial}.
This projectivisation was studied by H.~Crapo and
W.~Whiteley~\cite{CW1982} and I.~Izmestiev~\cite{Izmestiev1,Izmestiev2}.

\subsection{Basics of projective statics}
\label{sec:projstatics}

Recall that the \emph{projective space} $\mathbb{R} P^n$ is the space of
one-dimensional subspaces of $\mathbb{R}^{n+1}$, %the \emph{points} of $\mathbb{R} P^n$, 
which are denoted by homogeneous coordinates
\begin{equation*}
  \hat p=(a_1:\ldots:a_{n+1})\in \mathbb{R} P^n.
\end{equation*}
Denote by $\Lambda^{2}(\mathbb{R}^{n+1})$ the space of exterior 2-forms on
$\mathbb{R}^{n+1}$. We associate to each point
$p=(a_1,\ldots,a_{n+1})\in\mathbb{R}^{n+1}$ the 1-form
\begin{equation*}
  dp = a_1 dx_1 + \ldots + a_{n+1} dx_{n+1}.
\end{equation*}

\begin{definition}
  Let $\hat p, \hat q$ be two points in $\mathbb{R} P^n.$ 
  \emph{Forces} in $\mathbb{R} P^n$ acting at point $\hat p_i$ in direction
  $\hat q_i$ are represented by  decomposable
  2-forms $dp \wedge dq \in \Lambda^{2}(\mathbb{R}^{n+1})$.
\end{definition}

\begin{definition}
  Let $G$ be a graph. A \emph{projective framework} $G(\hat p)$ in $\mathbb{R}
  P^n$ consists of vertices $\hat p$ and edges $\hat p_i \hat p_j$ if $v_i
  v_j$ is an edge in $G$. (A projective framework is a realisation of a
  graph in projective space.)
\end{definition}

\begin{definition}
  Let $G(\hat p)$ be a projective framework and let $p_i \in \mathbb{R}^{n +
  1}$ be fixed representatives of vertices $\hat p_i$.
  A \emph{projective stress} $\omega$ is an assignment of a real number
  $\omega_{ij}$ to each non-oriented edge $\hat p_i \hat p_j$ (i.e.,
  $\omega_{ij} = \omega_{ji}$).
  We say that a projective stress is in \emph{equilibrium} or is a
  \emph{self-stress} if for every vertex $\hat p_i$ the 2-form of forces
  vanishes:
  \begin{equation*}
    \sum\limits_{\{j \mid ij \text{ is an edge}\}} \omega_{ij} dp_i \wedge
    dp_j = 0
  \end{equation*}
  (the summation is over all edges of $G(\hat p)$ adjacent to $\hat p_i$).
\end{definition}

\subsection{Projective liftings and corresponding self-stresses}
\label{sec:proofss}

In this subsection, we discuss self-stressability of projective frameworks
with projective stresses defined by projective liftings.

\begin{definition}
  Let $S(p)$ be a non-degenerate oriented polyhedral surface in $\mathbb{R}^3$
  and $G(\hat p)$ the corresponding projective framework in $\mathbb{R} P^2$.
  Let further $\omega_S$ be the projective lifting coefficients for $S(p)$
  as in Definition~\ref{oriented-def-stress}.
  We then say that $S(p)$ is a \emph{lifting} of the projective framework
  $G(\hat p)$ and the collection of 2-forms $\Psi_{ij} = \omega_{ij} dp_i
  \wedge dp_j$ is the projective stress \emph{associated} to the lifting $S(p)$. 
\end{definition}

\begin{theorem}
  \label{thm:projselfstress} 
  The projective stress $\Psi_S$ on $G(\hat p)$ associated to a lifting
  $S(p)$ is a self-stress.
\end{theorem}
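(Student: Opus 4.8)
The plan is to pass to the standard linear isomorphism $\Lambda^2(\mathbb{R}^3)\cong\mathbb{R}^3$ under which the decomposable $2$-form $dp_i\wedge dp_j$ corresponds to the cross product $p_i\times p_j$ (matching $dx_2\wedge dx_3\mapsto e_1$, $dx_3\wedge dx_1\mapsto e_2$, $dx_1\wedge dx_2\mapsto e_3$). Since this map is a linear isomorphism, the self-stress condition $\sum_j \omega_{ij}\,dp_i\wedge dp_j=0$ at a vertex $\hat p_i$ becomes equivalent to the vector identity $\sum_j \omega_{ij}\,(p_i\times p_j)=0$ in $\mathbb{R}^3$, the sum running over the neighbours of $p_i$. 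I would establish this vertex by vertex.

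The key step is to reinterpret each term $\omega_{ij}(p_i\times p_j)$ through the two faces adjacent to the edge. Since $S(p)$ is projectively non-degenerate, every face $F$ spans an affine plane missing $O$, so there is a unique covector $n_F$ with $\langle n_F,x\rangle=1$ for all $x\in\Span(F)$; explicitly, for any three vertices $p_a,p_b,p_c$ of $F$ one has $n_F=\big((p_b-p_a)\times(p_c-p_a)\big)/\det(p_a,p_b,p_c)$. Writing $n_R,n_L$ for the covectors of the right and left faces of the oriented edge $\overrightarrow{p_ip_j}$, both underlying planes contain $p_i$ and $p_j$, whence $\langle n_R-n_L,p_i\rangle=\langle n_R-n_L,p_j\rangle=0$ and therefore $n_R-n_L=\lambda\,(p_i\times p_j)$ for some scalar $\lambda$. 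To pin down $\lambda$, I would use Proposition~\ref{prop-does-not-depend} to take $p_k$ on the right face and $p_l$ on the left face and compute $\langle n_R,p_l\rangle$: a short multilinear expansion of $\det(p_l,p_j-p_i,p_k-p_i)$ together with the cyclic invariance of the determinant gives $\langle n_R,p_l\rangle-1=\det(p_j-p_i,p_k-p_i,p_l-p_i)/\det(p_i,p_j,p_k)$, while on the other side $\langle n_R-n_L,p_l\rangle=\lambda\det(p_i,p_j,p_l)$. Comparing the two identifies $\lambda$ exactly with the projective lifting coefficient of Definition~\ref{def:stresscoef}, yielding the central identity
\[
  \omega_{ij}\,(p_i\times p_j)=n_R-n_L .
\]

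With this identity the vertex condition collapses to a telescoping sum. Around a fixed $p_i$, orientability of $S(p)$ lets me list the neighbours $p_{j_1},\dots,p_{j_m}$ and the incident faces $F_1,\dots,F_m$ in cyclic order, with $F_s$ between the edges $p_ip_{j_s}$ and $p_ip_{j_{s+1}}$ (indices mod $m$). Consistency of the orientation makes each face $F_s$ the left face of $\overrightarrow{p_ip_{j_s}}$ and the right face of $\overrightarrow{p_ip_{j_{s+1}}}$, so summing the central identity over the edges at $p_i$ gives $\sum_{s}\omega_{ij_s}(p_i\times p_{j_s})=\sum_s\big(n_{F_{s-1}}-n_{F_s}\big)=0$, a cyclic sum of consecutive differences (the conclusion is insensitive to the precise right/left convention, since reversing it only negates every term). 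Translating back through $\Lambda^2(\mathbb{R}^3)\cong\mathbb{R}^3$ gives the desired equilibrium at $\hat p_i$.

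The main obstacle is the central identity $\omega_{ij}(p_i\times p_j)=n_R-n_L$, that is, recognising the projective lifting coefficient as precisely the coordinate of the difference of adjacent face covectors along $p_i\times p_j$, and confirming it by the determinant expansion above. Once this is in place, the reduction to cross products and the cyclic telescoping are routine, the only remaining care being the bookkeeping of right and left faces around each vertex, which is supplied by the orientation of $S(p)$.
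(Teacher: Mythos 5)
Your proof is correct, but it takes a genuinely different route from the paper's. The paper proves Theorem~\ref{thm:projselfstress} by induction on the valence $k$ of a vertex: the base case $k=3$ rests on the Cramer-type identity $\det(p_0,p_2,p_3)\,p_1+\det(p_0,p_3,p_1)\,p_2+\det(p_0,p_1,p_2)\,p_3=\det(p_1,p_2,p_3)\,p_0$, so that wedging with $dp_0$ annihilates the sum, and the inductive step splits a $k$-valent star into a $(k-1)$-valent one and a $3$-valent one by inserting an auxiliary point $q=p_0+v$ on the intersection line of the planes $\Span(p_0,p_1,p_2)$ and $\Span(p_0,p_{k-1},p_k)$, re-expressing coefficients via Proposition~\ref{prop-does-not-depend}. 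You instead dualise each face plane to the covector $n_F$ with $\langle n_F,\cdot\rangle\equiv 1$ on the plane (well defined precisely because of projective non-degeneracy), establish the edge-wise identity $\omega_{ij}\,(p_i\times p_j)=n_R-n_L$, and telescope around each vertex; your identification of $\Lambda^2(\mathbb{R}^3)$ with $\mathbb{R}^3$, the determinant expansion pinning down $\lambda=\omega(p_i,p_j;p_k,p_l)$, and the insensitivity to the left/right convention all check out, and your freedom in choosing $p_k$, $p_l$ on the two faces handles non-triangular faces exactly as the paper's appeal to Proposition~\ref{prop-does-not-depend} does. What your route buys: it is non-inductive and uniform in the valence, and it is conceptually the projective analogue of the classical Maxwell--Cremona face-gradient argument --- $n_F$ plays the role of the reciprocal vertex attached to a face, and $\omega_{ij}$ is recognised as the jump of the dual point across the edge, which explains \emph{why} the coefficient of Definition~\ref{def:stresscoef} has the form it does. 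What the paper's route buys: it stays entirely within the $2$-form formalism of Section~\ref{sec:projstatics} using only elementary determinant manipulations, with Proposition~\ref{prop-does-not-depend} as the sole reduction device. One small point you should make explicit: for an edge $p_ip_j$ of a projectively non-degenerate surface one has $p_i\times p_j\neq 0$ (if $p_i$ and $p_j$ were parallel, the affine line through them, and hence a face plane, would contain $O$), so the relation $n_R-n_L=\lambda\,(p_i\times p_j)$ really does determine $\lambda$ uniquely.
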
 
\begin{proof} 
  Let $S(p)$ be a non-degenerate oriented polyhedral surface in $\mathbb{R}^3$.
  Consider a $k$-valent vertex $p_0$ of $G(\hat p)$ with emanating edges
  $p_0p_1, \ldots, p_0p_k$. We prove the statement of the theorem by
  induction on $k$.
  We start with $k = 3$ and show $\sum_{i = 1}^3 \Psi_{0i} = 0$.
  We have
  \smallskip

  \noindent
  $\Psi_{01} + \Psi_{02} + \Psi_{03} 
  =
  \omega_{01} dp_0 \wedge dp_1 +
  \omega_{02} dp_0 \wedge dp_2 +
  \omega_{03} dp_0 \wedge dp_3
  =
  dp_0 \wedge d(\omega_{01} p_1 + \omega_{02} p_2 + \omega_{03} p_3)
  $

  $
  = dp_0 \wedge
  d\big(
  \frac{\det(p_1 - p_0, p_3 - p_0, p_2 - p_0)}
  {\det(p_0, p_1, p_3) \det(p_0, p_1, p_2)}
  p_1
  +
  \frac{\det(p_2 - p_0, p_1 - p_0, p_3 - p_0)}
  {\det(p_0, p_2, p_1) \det(p_0, p_2, p_3)}
  p_2
  +
  \frac{\det(p_3 - p_0, p_2 - p_0, p_1 - p_0)}
  {\det(p_0, p_3, p_2) \det(p_0, p_3, p_1)}
  p_3
  \big)
  $

  $
  = 
  \frac{\det(p_1 - p_0, p_2 - p_0, p_3 - p_0)}
  {\det(p_0, p_1, p_2) \det(p_0, p_2, p_3) \det(p_0, p_3, p_1)}
  dp_0 \wedge
  d\big(
  \det(p_0, p_2, p_3)
  p_1
  +
  \det(p_0, p_3, p_1)
  p_2
  +
  \det(p_0, p_1, p_2)
  p_3
  \big)
  $

  $
  = 
  \frac{\det(p_1 - p_0, p_2 - p_0, p_3 - p_0)}
  {\det(p_0, p_1, p_2) \det(p_0, p_2, p_3) \det(p_0, p_3, p_1)}
  dp_0 \wedge
  d\big(
  \det(p_1, p_2, p_3)
  p_0
  \big)
  =
  0.
  $
  \smallskip

  \noindent 
  Let us now assume that the statement holds for $k-1$. We show that it
  holds for $k$. 
  For that let $v$ be any non-zero vector in the intersection of the planes
  $p_0 p_1 p_2$ and $p_0 p_{k-1} p_k$. Let us set $q := p_0 + v$. Then, by
  Proposition~\ref{prop-does-not-depend}, we have
  \begin{equation*}
    \begin{array}{l@{\,}l@{\,}l@{\,}l}
      \omega_{01}
      &=
      \omega(p_0,p_1;p_k,p_2)
      &=
      \omega(p_0,p_1;p_k,q);
      \\
      \omega_{02}
      &=
      \omega(p_0,p_2;p_1,p_3)
      &=
      \omega(p_0,p_2;q,p_3);
      \\
      \omega_{0,k-1}
      &=
      \omega(p_0,p_{k-1};p_{k-2},p_k)
      &=
      \omega(p_0,p_{k-1};p_{k-2},q);
      \\
      \omega_{0k}
      &=
      \omega(p_0,p_{k};p_{k-1},p_1)
      &=
      \omega(p_0,p_k;q,p_1);
      \\
      &
      \hphantom{=}\;
      \omega(p_0,q;p_{k-1},p_2)
      &=
      \omega(p_0,q;p_k,p_1)
      =
      -\omega(p_0,q;p_1,p_k).
    \end{array}
  \end{equation*}
  Therefore,
  \begin{equation*}
    \sum\limits_{i=1}^k \Psi_{0i}
    =
    \sum\limits_{i=1}^k \omega_{0i} dp_0 \wedge dp_i
    =
    dp_0 \wedge d\big(\sum\limits_{i=1}^k \omega_{0i} p_i\big).
  \end{equation*}
  Denoting $\omega_{0q} := \omega(p_0, q; p_{k-1}, p_2) q$,
  the second part expands to 
  \smallskip

    $
    \sum\limits_{i=1}^k \omega_{0i} p_i
    =
    (
    \omega_{02} p_2
    +
    \ldots
    +
    \omega_{0,k-1} p_{k-1}
    +
    \omega_{0q} q
    )
    +
    (
    -
    \omega_{0q} q
    +
    \omega_{01} p_1
    +
    \omega_{0k} p_k
    )
    $

    $
    =
    \omega(p_0, p_2; p_1, p_3) p_2
    +
    \ldots
    +
    \omega(p_0, p_{k-1}; p_{k-2}, p_k) p_{k-1}
    +
    \omega(p_0, q; p_{k-1}, p_2) q 
    $

    $
    \hphantom{=}
    -
    \omega(p_0, q; p_{k-1}, p_2) q
    +
    \omega(p_0, p_1; p_k, p_2) p_1
    +
    \omega(p_0, p_k; p_{k-1}, p_1) p_k
    $

    $
    =
    \omega(p_0, p_2; q, p_3) p_2
    +
    \ldots
    +
    \omega(p_0, p_{k-1}; p_{k-2}, q) p_{k-1}
    +
    \omega(p_0, q; p_{k-1}, p_2) q 
    $

    $
    \hphantom{=}
    +
    \omega(p_0, q; p_1, p_k) q
    +
    \omega(p_0, p_1; p_k, q) p_1
    +
    \omega(p_0, p_k; q, p_1) p_k
    $

    $\overset{(*)}{=} 0 + 0 = 0.$
    \smallskip

    \noindent
  Equality $(*)$ holds since the first row vanishes by the induction
  hypothesis, while the second row vanishes by the base of induction ($k =
  3$). So we have the equilibrium at every vertex. This concludes the
  proof.
\end{proof}

\subsection{Proof of Theorem~\ref{thm:self-stress}}

We conclude this section with proving Theorem~\ref{thm:self-stress} which claims that the projected framework $G_S(\bar p)$ is a self-stressed framework with self-stress $\bar \omega_S$.

\begin{proof}[Proof of Theorem~\ref{thm:self-stress}]
  Let $S(p)$ be a polyhedral surface with the framework defined by its
  1-skeleton denoted by $G_S(p)$.
  Its projection on a proper projection plane $\pi$ yields the projective
  framework $G_S(\bar p)$ which satisfies the conditions of
  Definition~\ref{def-betas}. Let further $G_S(\hat p)$ denote the
  corresponding projective framework whose homogeneous coordinates $\hat p$ are
  given by the Euclidean coordinates $p$.
  Finally, let $\beta_i$ be the projection-coefficients of $S(p)$ to $\pi$.
  Recall that in this setting, the expression for the projection-stresses is as
  follows (Equation~\eqref{eq:projection-stress})
  \begin{equation*}
\bar\omega_{ij}=\beta_i\beta_j\omega_{ij}.
  \end{equation*}
  Consider a point $\bar p_0$ with its emanating edges 
  $\bar p_0\bar p_1,\ldots, \bar p_0\bar p_k$.
  Theorem~\ref{thm:projselfstress} implies that the corresponding projective
  stress $\Psi_S$ is a self stress hence we have equilibrium in each vertex
  star:
  \begin{align} 
    0 
    &=
    \sum\limits_{i=1}^{k} \omega_{0i} dp_0 \wedge dp_i
    \nonumber
    =
    \sum\limits_{i=1}^{k} 
    \omega_{0i} d(\beta_0 \bar p_0) \wedge d(\beta_i\bar p_i)
    \nonumber
    =
    \sum\limits_{i=1}^{k}
    \beta_0\beta_i\omega_{0i} d\bar p_0 \wedge d(\bar p_i-\bar p_0)
    \nonumber
    \\
    &=
    d\bar p_0
    \wedge 
    d\Big(\sum\limits_{i=1}^{k}
    \beta_0\beta_i\omega_{0i} (\bar p_i-\bar p_0)\Big) 
    \label{eq:wedgeproduct}
  \end{align} 
  The sum of all vectors in the $1$-from corresponding to the second component of
  the wedge product~\eqref{eq:wedgeproduct} is parallel to $\pi$, since all
  vectors $\bar p_i - \bar p_0$ are parallel to $\pi$. The vector corresponding
  to the $1$-form in the first component of~\eqref{eq:wedgeproduct} intersects
  $\pi$ and is hence not parallel to $\pi$. Therefore, the above equation is
  equivalent to 
  \begin{equation*}
    d\Big(\sum\limits_{i=1}^{k}
    \beta_0\beta_i\omega_{0i} (\bar p_i-\bar p_0)\Big) =0,
  \end{equation*}
  which is further equivalent to
  \begin{equation*}
    \sum\limits_{i=1}^{k} \beta_0\beta_i\omega_{0i} (\bar p_i-\bar p_0) =0,
  \end{equation*}
  and hence
  \begin{equation*}
    \sum\limits_{i=1}^{k}\bar\omega_{0i} (\bar p_i-\bar p_0)=0.
  \end{equation*}
  Thus the equilibrium condition is satisfied at every vertex of
  $G_S(\bar p)$, and so $\bar\omega_S$ is a self-stress of $G_S(\bar p)$.
\end{proof}

\section{Projective lifting coefficients in integer geometry}
\label{sec:projective-stress}

In this section we relate projection-stresses induced by integer surfaces to 
integer invariants of those surfaces.
We start in Section~\ref{subsec:integer-geometry} with definitions of integer invariants that we use in the formulae for the stresses. 
Further, in Section~\ref{subsec:how-to-compute} we write the analytic
expressions for integer invariants. Finally, in
Section~\ref{subsec:lifting-coefficients} we formulate and prove such formulae.

\subsection{Basic invariants of integer geometry}
\label{subsec:integer-geometry}

We call a point in $\mathbb{R}^n$ an \emph{integer point} if all its coordinates are
integers.
An \emph{integer segment} or \emph{integer vector} has integer points as endpoints
(see Figure~\ref{fig:integer-invariants} centre).
An \emph{integer plane} contains a full rank integer sub-lattice 
(see Figure~\ref{fig:integer-invariants} left).
An \emph{integer angle} between two intersecting integer planes (of any
dimension) contains a full-rank integer lattice.

Let us recall a well-known mathematical notion in group theory. 
The \emph{index} of a subgroup $H$ in a group $G$ is the number of left cosets
in $G$ with respect to $H$. 

\begin{definition}
  We define the following invariants in integer geometry. They  are all 
  non-negative integers:
  \begin{itemize}
    \item The \emph{integer length}  between two points $p$ and $q$ is the
      number of integer points in the interior of the segment $pq$ plus one. We
      denote it by $\Il(p,q)$ (see Figure~\ref{fig:integer-invariants}
      centre-top).
    \item  The \emph{integer area/volume} of the parallelogram/parallelepiped
      generated by integer vectors $v_1,\ldots,v_k$ is the index of the
      sublattice generated by these vectors in the integer sublattice of the
      affine space spanned by $v_1,\ldots,v_k$. We denote it by
      $\IV(v_1,\ldots,v_k)$. % \textcolor{red}{notation? $w$?} 
      (In the case of two vectors we denote it by $\IA(v_1,v_2)$.
      See Figure~\ref{fig:integer-invariants} right.)
    \item The \emph{integer distance} from a point $p$ to the plane $\pi$ (of
      any dimension) is the index of the sublattice generated by all integer
      vectors of $\{p\} \cup \pi$ in the integer lattice in $\Span(p,\pi)$. We
      denote it by $\Id (p,\pi)$ (see Figure~\ref{fig:integer-invariants}
      centre-bottom).
    \item The \emph{integer sine} of the integer angle between planes $\pi_1$
      and  $\pi_2$ is the index of the sublattice generated by all integer
      vectors of $\pi_1\cup \pi_2$ in the integer lattice of
      $\Span(\pi_1,\pi_2)$. We denote it by $\isin(\pi_1,\pi_2)$.
  \end{itemize}
\end{definition}

\begin{center}

\begin{figure}[t]
  \begin{overpic}[width=.33\textwidth]{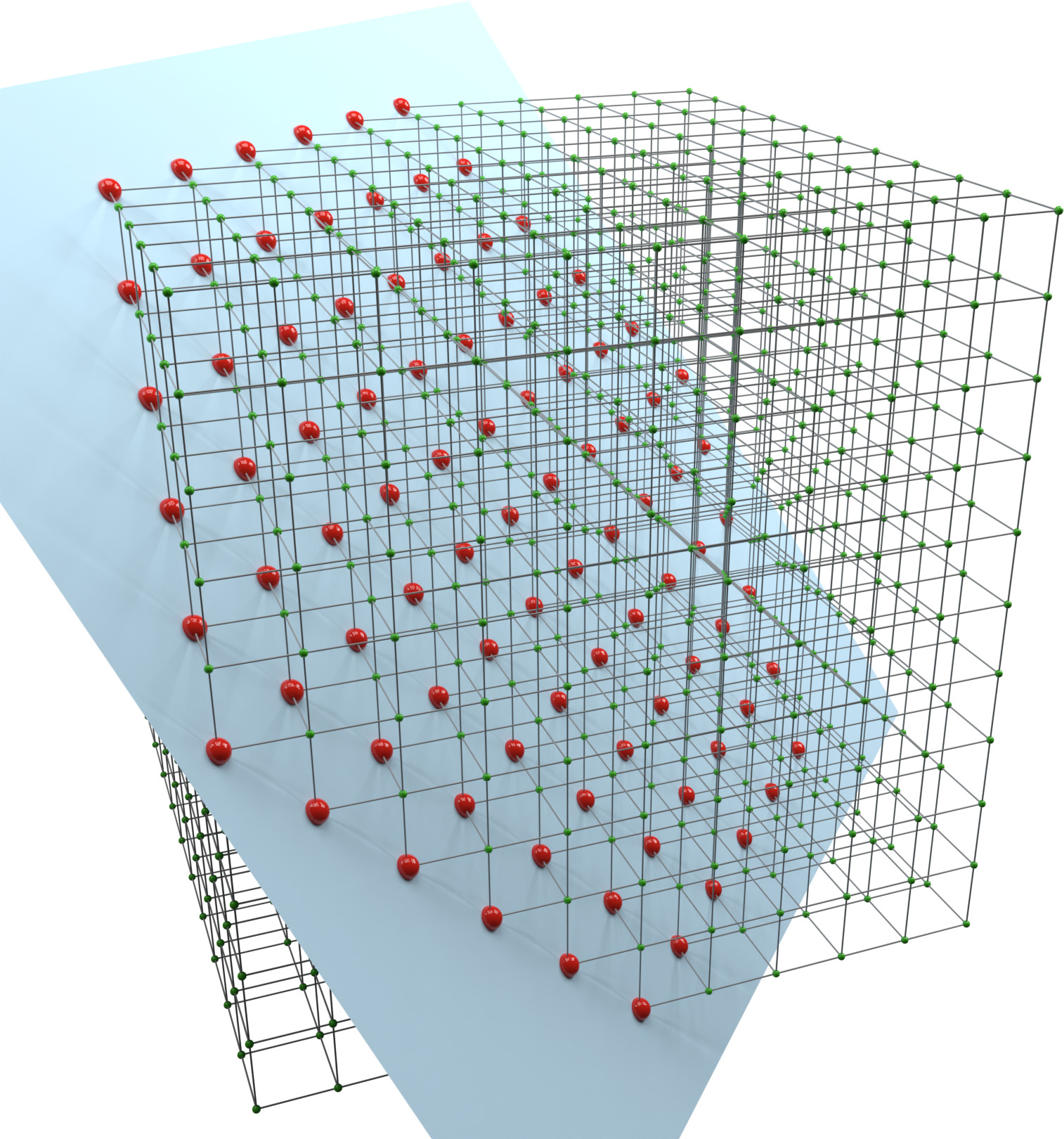}
    \put(2,63){\small$\pi$}
  \end{overpic}
  \begin{minipage}[b]{.25\textwidth}
    \begin{overpic}[width=\textwidth]{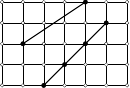}
      \put(10,37){\small$p_1$}
      \put(68,61){\small$p_2$}
      \put(26,4){\small$q_1$}
      \put(83,45){\small$q_2$}
    \end{overpic}
    \\[1.4mm]
    \begin{overpic}[width=\textwidth]{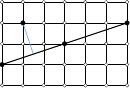}
      \put(12,44){\small$p$}
      \put(74,37){\small$\pi$}
    \end{overpic}
  \end{minipage}
  \begin{overpic}[width=.40\textwidth]{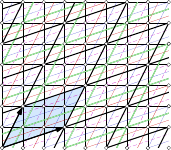}
    \put(19,4){\small\contour{white}{$v_1$}}
    \put(4,18){\small\contour{white}{$v_2$}}
  \end{overpic}
  \caption{\emph{Left:} Integer plane $\pi$ in $\ZZ^3$. It containes a full
  rank integer sub-lattice (bigger red dots).
  \emph{centre-top:} Integer length between points: $\Il(p_1, p_2) = 1$
  and $\Il(q_1, q_2) = 3$.
  \emph{Centre-bottom:} Integer distance from point $p$ to line $\pi$: $\Id(p,
  \pi) = 5$.
  \emph{Right:} Integer area of a parallelogram spanned by two vectors:
  $\IA(v_1, v_2) = 5$.
  The five cosets are illustrated in different colors and different
  line-styles.
  }
  \label{fig:integer-invariants}
\end{figure}
\end{center}

\subsection{How to compute integer invariants}
\label{subsec:how-to-compute}

The following properties of the above integer invariants are helpful for their
computations.

Let $v$ be an integer vector. Then we denote by $\gcd(v)$ the greatest common
divisor of its coordinates. For vectors $u,v$, we denote by $u\times v$ their
cross-product.

\begin{proposition}
  \label{prop:integer-formulae}
  Consider four distinct integer points $p_1, p_2, p_3, p_4 \in \ZZ^3$. Let
  $\pi_{123}$ and $\pi_{124}$ be the planes spanned by 
  $p_1 p_3 p_2$ and $p_1 p_4 p_2$, respectively. Then the following holds:  
  \begin{enumerate}
    \item\label{enum1} integer length: 
      $\Il(p_1, p_2) = \gcd(p_2 - p_1)$;
    \item\label{enum2} integer area: 
      $\IA(p_3 - p_1, p_4 - p_1) = \gcd((p_3 - p_1) \times (p_4 - p_1))$;
    \item\label{enum3} integer volume: 
      $\IV(p_3 - p_1, p_4 - p_1, p_2 - p_1)
      = |\det(p_3 - p_1, p_4 - p_1, p_2 - p_1)|$;
    \item\label{enum4} integer distance to a line: 
      $\displaystyle \Id(p_3 , \Span(p_1, p_2))
      = \frac{\IA(p_1 - p_3, p_2 - p_3)}{\Il(p_1, p_2)};
      $
    \item\label{enum5} integer distance to a plane: 
      $\displaystyle \Id(p_4, \pi_{123})
      %= \Id(p_2, \Span(p_1, p_3, p_3))
      = \frac{\IV(p_1 - p_4, p_2 - p_4, p_3 - p_4)}{\IA(p_2 - p_1, p_3 - p_1)}$;
    \item\label{enum6} integer sine: 
      $\displaystyle\isin(\pi_{123}, \pi_{124})
      = \frac{\IV(p_3 - p_1, p_4 - p_1, p_2 - p_1)}{\Il(p_1, p_2) 
      \Id(p_3, \Span(p_1, p_2)) \Id(p_4, \Span(p_1, p_2))}$. 
  \end{enumerate}
\end{proposition}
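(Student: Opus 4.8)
The plan is to prove the six identities in a dependency order, grounding everything in the single fact that the index of a full-rank sublattice of $\ZZ^n$ equals the absolute value of the determinant of a matrix of its generators (Smith normal form), and then building the area, distance and sine formulae by repeated use of multiplicativity of the index along towers of lattices. Parts~\ref{enum3} and~\ref{enum1} are immediate: in part~\ref{enum3} the three vectors generically span $\mathbb{R}^3$, so the ambient lattice of the span is $\ZZ^3$ itself and the integer volume is the index $[\ZZ^3:\langle p_3-p_1,p_4-p_1,p_2-p_1\rangle]=|\det(\cdots)|$ by the quoted fact; in part~\ref{enum1}, writing $p_2-p_1=d\,w$ with $w$ primitive and $d=\gcd(p_2-p_1)$, the integer points of the segment are exactly the $p_1+kw$, $0\le k\le d$, giving $d-1$ interior points and $\Il(p_1,p_2)=d$.

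For the integer area in part~\ref{enum2} I would compare Euclidean covolumes inside the plane $\pi=\Span(v_1,v_2)$, where $v_i:=p_{i+2}-p_1$. Writing $v_1\times v_2=c\,n_0$ with $n_0$ primitive and $c=\gcd(v_1\times v_2)$, the key lemma is that the full plane lattice $\Lambda_\pi:=\pi\cap\ZZ^3$ has Euclidean covolume $|n_0|$. This follows by choosing $u\in\ZZ^3$ with $u\cdot n_0=1$ (possible since $n_0$ is primitive, by B\'ezout), verifying the direct decomposition $\ZZ^3=\Lambda_\pi\oplus\ZZ u$, and reading off $1=\operatorname{covol}(\Lambda_\pi)\cdot|n_0|^{-1}$ from the volume of the fundamental parallelepiped. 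Since $\langle v_1,v_2\rangle$ has covolume $|v_1\times v_2|=c\,|n_0|$, the index is $\IA(v_1,v_2)=c\,|n_0|/|n_0|=c$.

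The two distance formulae, parts~\ref{enum4} and~\ref{enum5}, I would obtain by splitting an index along a tower. For part~\ref{enum5}, using $p_2-p_4=(p_2-p_1)+(p_1-p_4)$ and the analogous identity for $p_3$, one has $\langle p_1-p_4,p_2-p_4,p_3-p_4\rangle=\langle p_1-p_4,p_2-p_1,p_3-p_1\rangle$, while by definition $\Id(p_4,\pi_{123})=[\ZZ^3:\Lambda_{123}\oplus\ZZ(p_1-p_4)]$, with $\Lambda_{123}$ the full lattice of $\pi_{123}$. The chain $\langle p_2-p_1,p_3-p_1\rangle\oplus\ZZ(p_1-p_4)\subseteq\Lambda_{123}\oplus\ZZ(p_1-p_4)\subseteq\ZZ^3$ then factors the bottom index as $\IV=\Id(p_4,\pi_{123})\cdot[\Lambda_{123}:\langle p_2-p_1,p_3-p_1\rangle]=\Id(p_4,\pi_{123})\cdot\IA(p_2-p_1,p_3-p_1)$, which is part~\ref{enum5}. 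Part~\ref{enum4} is the exact one-dimension-lower analogue: with $w_0$ the primitive direction of $p_2-p_1$, the chain $\langle p_1-p_3,\Il(p_1,p_2)\,w_0\rangle\subseteq\langle p_1-p_3,w_0\rangle\subseteq\Lambda_{123}$ factors $\IA(p_1-p_3,p_2-p_3)$ as $\Id(p_3,\Span(p_1,p_2))\cdot\Il(p_1,p_2)$.

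The main obstacle is the integer sine in part~\ref{enum6}. Here $\pi_{123}$ and $\pi_{124}$ meet in the line $\ell=\Span(p_1,p_2)$, and I would work in bases adapted to $\ell$: since $w_0$ is primitive in $\ZZ^3$ it extends to bases $\{w_0,a_1\}$ of $\Lambda_{123}$ and $\{w_0,a_2\}$ of $\Lambda_{124}$. The integer vectors of $\pi_{123}\cup\pi_{124}$ then generate $\Lambda_{123}+\Lambda_{124}=\langle w_0,a_1,a_2\rangle$, so $\isin(\pi_{123},\pi_{124})=|\det(w_0,a_1,a_2)|$ by part~\ref{enum3}. Writing $p_3-p_1=\alpha w_0+h_1a_1$ and $p_4-p_1=\beta w_0+h_2a_2$, part~\ref{enum4} applied within each plane gives $\Id(p_3,\ell)=|h_1|$ and $\Id(p_4,\ell)=|h_2|$, while $p_2-p_1=\Il(p_1,p_2)\,w_0$. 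Expanding $\IV(p_3-p_1,p_4-p_1,p_2-p_1)=|\det(\alpha w_0+h_1a_1,\,\beta w_0+h_2a_2,\,\Il(p_1,p_2)w_0)|$ by multilinearity, every term carrying two copies of $w_0$ vanishes and only $h_1h_2\,\Il(p_1,p_2)\det(a_1,a_2,w_0)$ survives, so $\IV=\Il(p_1,p_2)\cdot|h_1|\cdot|h_2|\cdot\isin$; rearranging yields part~\ref{enum6}. The delicate points are precisely the lattice-theoretic ones: confirming that $\{w_0,a_i\}$ is genuinely a basis of each full plane lattice (so that the $a_i$-coordinate equals the integer distance) and that $w_0,a_1,a_2$ generate exactly the lattice appearing in the definition of the integer sine; once these are secured, the determinant expansion is routine.
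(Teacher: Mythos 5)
Your proposal is correct, but it follows a genuinely different route from the paper's own proof. The paper handles parts \ref{enum1}--\ref{enum3} by citing \cite[Th.~18.30]{karpenkov-book}, and proves \ref{enum4}--\ref{enum6} by a normalization argument: since all six quantities are invariants of the integer-affine group $\GL(3,\ZZ)\ltimes\ZZ^3$, one may place the points in the normal form $p_1=(0,0,0)$, $p_2=(a,0,0)$, $p_3=(b,c,0)$, $p_4=(q,r,s)$, after which every invariant is computed explicitly ($\Il=a$, $\IA=ac$, $\IV=acs$, $\Id(p_3,\Span(p_1,p_2))=c$, $\Id(p_4,\pi_{123})=s$, $\Id(p_4,\Span(p_1,p_2))=\gcd(r,s)$, $\isin=s/\gcd(r,s)$) and the identities are verified by inspection. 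You instead argue intrinsically and basis-free: the index-equals-determinant fact for \ref{enum3}, a covolume comparison against the full plane lattice for \ref{enum2}, multiplicativity of the index along lattice towers for \ref{enum4} and \ref{enum5}, and adapted bases $\{w_0,a_1\}$, $\{w_0,a_2\}$ of the two plane lattices together with a multilinear determinant expansion for \ref{enum6}; all of these lattice lemmas (splitting $\ZZ^3=\Lambda_\pi\oplus\ZZ u$, extending a primitive vector to a basis, factoring an index through an intermediate lattice) are standard and correctly deployed. What each approach buys: the paper's normalization gives a short, uniform computation, but it silently relies on the invariance of the six quantities and on the existence of the normal form, and it outsources \ref{enum1}--\ref{enum3} to a reference; your proof is longer but self-contained, exposes the structural reason each identity holds (each is literally an index factoring through a tower), and would extend to higher dimensions with little change. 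One point you should make explicit, which the paper also leaves implicit: \ref{enum3} requires the three vectors to be linearly independent (otherwise the index is a positive integer while the determinant vanishes), so your word ``generically'' should be replaced by this precise hypothesis; it is satisfied wherever you invoke \ref{enum3}, since there $\pi_{123}\neq\pi_{124}$.
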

\begin{proof}
  Properties \ref{enum1}, \ref{enum2} and \ref{enum3} follow immediately from
  the definition and \cite[Th.~18.30]{karpenkov-book}.

  Properties \ref{enum4}, \ref{enum5} and \ref{enum6} are relations between
  integer lattice invariants, and therefore they can be computed for a simple
  choice of integer lattice coordinates:
  \begin{equation*}
    p_1=(0,0,0),
    \qquad 
    p_2=(a,0,0),
    \qquad
    p_3=(b,c,0),
    \qquad
    p_4=(q,r,s),
  \end{equation*}
  for some positive integers $a,b,c,q,r,s$.
  In this case we obtain:
  \begin{equation*}
    \begin{array}{lll}
      \Il(p_1, p_2) = a,
      \quad &
      \IA(p_1 - p_3, p_2 - p_3) = a c,
      \quad &
      \IV(p_3 - p_1, p_4 - p_1, p_2 - p_1) = a c s,
      \\
      \Id(p_3, \Span(p_1, p_2)) = c,
      \quad &
      \IA(p_2 - p_1, p_3 - p_1) =  a c,
      \quad &
      \isin(\pi_{123}, \pi_{124}) = s/\gcd(r, s),
      \\
      \Id(p_4, \pi_{123}) = s,
      \quad &
      \Id(p_4, \Span(p_1, p_2)) = \gcd(r,s),
    \end{array}
  \end{equation*}
  Now the formulae are straightforward.
\end{proof}

\subsection{Projective lifting coefficients of integer surfaces}
\label{subsec:lifting-coefficients}

Let us now consider four points $p_1, p_2, p_3, p_4$ with integer coordinates.
Then we can express the projective lifting coefficient $\omega_{12}$ in terms
of integer invariants. Consequently, $\omega_{12}$ is a rational number. Note
that since all integer invariants are non-negative integers, we must take the
absolute value of $\omega$.
%\comment[id=FM]{Is the notation $\omega_{12}$ correct or do we want to write $w(p_1,p_2)$ as in Prop 3.3?}
%\comment[id=BS]{I think it's ok because of Def 2.5}

\begin{proposition}
  \label{integer-propos}
  Assume that $p_1, p_2 \in \ZZ ^3$ and let $\pi_1, \pi_2$ be two integer
  planes containing the line through $p_1 p_2$. Then the projective lifting
  coefficient for the edge $p_1 p_2$ induced by the planes $\pi_1$ and $\pi_2$
  satisfies:
  \begin{equation*}
    |\omega(p_1,p_2;p_3,p_4)|=
    \frac{\isin(\pi_1,\pi_2)}
    {\Il(p_1,p_2)
    \Id(O,\pi_1)
    \Id(O,\pi_2)},
  \end{equation*}
  where as before $p_3\in \pi_1\setminus \pi_2$ and $p_4\in \pi_2\setminus
  \pi_1$ can be chosen arbitrarily
  $($in brief we write $\omega_{12}=\omega(p_1,p_2;p_3,p_4)$$)$.
\end{proposition}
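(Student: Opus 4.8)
The plan is to reduce the statement to a direct substitution using the analytic formulae collected in Proposition~\ref{prop:integer-formulae}. First I would observe that since $\pi_1$ and $\pi_2$ are integer planes, I may choose the auxiliary points $p_3\in\pi_1\setminus\pi_2$ and $p_4\in\pi_2\setminus\pi_1$ to be integer points; by Proposition~\ref{prop-does-not-depend} the value of $\omega(p_1,p_2;p_3,p_4)$ is independent of this choice, so this entails no loss of generality and places us exactly in the four-integer-point setting of Proposition~\ref{prop:integer-formulae}. Throughout I work with absolute values, as all integer invariants are non-negative.

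Next I would rewrite the three determinants appearing in Definition~\ref{def:stresscoef}. By item~\ref{enum3}, the numerator is $|\det(p_2-p_1,p_3-p_1,p_4-p_1)|=\IV(p_3-p_1,p_4-p_1,p_2-p_1)$. For the denominator, the key trick is to feed the origin into item~\ref{enum5} in place of the fourth point: since $O$ is an integer point that does not lie on $\pi_1$ (the plane $\pi_1$ avoids $O$ by hypothesis), item~\ref{enum5} gives $|\det(p_1,p_2,p_3)|=\IV(p_1-O,p_2-O,p_3-O)=\Id(O,\pi_1)\cdot\IA(p_2-p_1,p_3-p_1)$, and symmetrically $|\det(p_1,p_2,p_4)|=\Id(O,\pi_2)\cdot\IA(p_2-p_1,p_4-p_1)$.

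Substituting these into $|\omega|$ yields
\[
|\omega(p_1,p_2;p_3,p_4)|=\frac{\IV(p_3-p_1,p_4-p_1,p_2-p_1)}{\Id(O,\pi_1)\,\Id(O,\pi_2)\,\IA(p_2-p_1,p_3-p_1)\,\IA(p_2-p_1,p_4-p_1)},
\]
so it remains to identify the residual ratio
\[
\frac{\IV(p_3-p_1,p_4-p_1,p_2-p_1)}{\IA(p_2-p_1,p_3-p_1)\,\IA(p_2-p_1,p_4-p_1)}=\frac{\isin(\pi_1,\pi_2)}{\Il(p_1,p_2)}.
\]
This last identity I would obtain by combining item~\ref{enum6} with item~\ref{enum4}: item~\ref{enum4} expresses each of $\Id(p_3,\Span(p_1,p_2))$ and $\Id(p_4,\Span(p_1,p_2))$ as $\IA(p_2-p_1,\cdot)/\Il(p_1,p_2)$ (using the base-point invariance $\IA(p_1-p_3,p_2-p_3)=\IA(p_2-p_1,p_3-p_1)$ of the integer area of a triangle), and substituting these into the denominator of item~\ref{enum6} produces, after simplifying the resulting powers of $\Il(p_1,p_2)$, exactly the claimed ratio. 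Matching the two displayed expressions completes the proof.

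The computation itself is routine once Proposition~\ref{prop:integer-formulae} is in hand; the only points requiring care are the legitimacy of choosing integer auxiliary points $p_3,p_4$, the fact that $O\notin\pi_1\cup\pi_2$ guarantees that the integer distances $\Id(O,\pi_i)$ are well-defined, and the bookkeeping of the argument orders in $\IV$ and $\IA$ together with the base-point invariance of $\IA$. I expect the main (though modest) obstacle to be this last bit of bookkeeping, namely verifying that the specific permutations of the arguments in the $\IV$ and $\IA$ factors align correctly, since all these invariants are defined only up to such reorderings.
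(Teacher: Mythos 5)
Your proof is correct, and it takes a genuinely different route from the paper's. The paper argues that both sides of the identity are invariant under changes of basis in $\GL(3,\ZZ)$ and under the choice of $p_3,p_4$, and then evaluates both sides in a single normal form, namely $\pi_1=\{x=n\}$, $p_1=(n,0,0)$, $p_2=(n,a,0)$, $p_3=(n,0,1)$, $p_4=(n+b,0,c)$ with $\gcd(b,c)=1$, using the formulae of Proposition~\ref{prop:integer-formulae}. You instead stay in general position and chain those same formulae symbolically: item~\ref{enum3} identifies the numerator with $\IV(p_3-p_1,p_4-p_1,p_2-p_1)$; item~\ref{enum5}, applied with the origin playing the role of the apex point (legitimate since $O\notin\pi_1\cup\pi_2$), factors each denominator determinant as $\Id(O,\pi_i)$ times an integer area; and items~\ref{enum4} and~\ref{enum6} convert the leftover ratio $\IV/(\IA\cdot\IA)$ into $\isin(\pi_1,\pi_2)/\Il(p_1,p_2)$. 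The details check out: the base-point invariance $\IA(p_1-p_3,p_2-p_3)=\IA(p_2-p_1,p_3-p_1)$ holds (both pairs of vectors have the same cross product, equivalently generate the same rank-two sublattice of the same plane); integer points $p_3\in\pi_1\setminus\pi_2$ and $p_4\in\pi_2\setminus\pi_1$ exist because the rank-two lattice of integer points of $\pi_i$ cannot lie in the line $\pi_1\cap\pi_2$, and Proposition~\ref{prop-does-not-depend} makes the choice harmless; and the permutation bookkeeping you worried about is vacuous, since $\IV$ and $\IA$ are lattice indices and hence symmetric in their arguments. As for what each approach buys: the paper's computation is shorter, but its reduction step is actually delicate --- a unimodular change of basis preserves the gcd of the coordinates of $p_1$, so a configuration such as $\pi_1=\{x=2\}$, $p_1=(2,1,0)$, which is primitive, can never be mapped to the stated form $p_1=(n,0,0)=(2,0,0)$, which is not; so the normal form as written does not cover all configurations. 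Your coordinate-free derivation needs no such reduction and applies uniformly, which makes it arguably the more robust of the two arguments, at the cost of somewhat heavier symbolic bookkeeping.
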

\begin{proof}
  The formula can be derived by directly computing both the right hand side and
  the left hand side of the equation. 
  First of all, note that both sides do not depend on the choice of a basis in
  $\ZZ^3$. Secondly, the
  points $p_3$ and $p_4$ in the computations can be chosen away from
  $\pi_1\cap\pi_2$ arbitrarily in the planes $\pi_1$ and $\pi_2$, respectively.

  For that reason, without loss of generality, the plane $\pi_1$ can be chosen
  to be $x=n$ for some integer $n$.
  In addition, we can assume that $p_1=(n,0,0)$, $p_2=(n,a,0)$ for some
  positive integer $a$, and $p_3=(n,0,1)$. Further, we pick the plane $\pi_2$
  passing through $p_1p_2$ as above and through $p_4=(n+b,0,c)$ for some
  relatively prime positive integers $b$ and $c$. 
  Now we compute the expressions of the equation for that choice of coordinates
  with the help of the identities in Proposition~\ref{prop:integer-formulae}
  and obtain
  \begin{equation*}
    \begin{array}{c}
      \displaystyle
      \isin(\pi_1,\pi_2)=b, 
      \qquad
      \Il(p_1p_2)=a,
      \qquad
      \Id(O,\pi_1)=\frac{an}{a}=n,
      \qquad
      \Id(O,\pi_2)=\frac{acn}{a}=cn;
      \\
      \displaystyle
      \omega_{S}(p_1p_2)=\frac{b}{acn^2}.
    \end{array}
  \end{equation*}
  This concludes the proof.
\end{proof}

\begin{remark}
  Later in Section~\ref{sec:self-stresses-klein-sails}, we will deal with Klein
  sails. They are convex polyhedral surfaces. As a result, we consider only
  convex surfaces which, subsequently, are corresponding to self-stresses of
  the same sign, so the computation of signs of $\omega_{S}$ does not play any
  role (since both $\pm\omega_{S}$ are self-stresses).
\end{remark}

\begin{example}
Consider the following four points
\begin{equation*}
p_1=(2,3,4),
\qquad
p_2=(2,7,9),
\qquad
p_3=(1,-3,5),
\qquad
p_4=(5,6,7).
\end{equation*}
Then, on the one hand, by Definition~\ref{def:stresscoef}
we have
\begin{equation*}
\omega(p_1,p_2;p_3,p_4)=\frac{\det(p_2-p_1,p_3-p_1,p_4-p_1)}{\det(p_1,p_2,p_3)\det(p_1,p_2,p_4)}
=\frac{99}{69\cdot (-9)}=
-\frac{11}{69}.
\end{equation*}
On the other hand, by Proposition~\ref{integer-propos} we get
\begin{equation*}
|\omega(p_1,p_2;p_3,p_4)|=
\frac{\isin(\pi_1,\pi_2)}
{\Il(p_1,p_2)
\Id(O,\pi_1)
\Id(O,\pi_2)}=
\frac{33}{1\cdot 69\cdot 3}=
\frac{11}{69}.
\end{equation*}
\end{example}

\section{Self-stresses corresponding to periodic Klein sails}
\label{sec:self-stresses-klein-sails}

Let us show how to use multidimensional continued fractions in the sense of Klein in order to generate algebraically periodic tensegrities. 
We start in Section~\ref{subsec:multi-dimensional} with general  definitions of multidimensional continued fraction and corresponding sails. Further, in Section~\ref{subsec:periodic-sails}, we discuss periodicity of algebraic sails. In Section~\ref{subsec:algebraically-periodic}, we apply the theory of self-stress projections to Klein sails.
We go through several %particular 
examples in
Section~\ref{subsec:examples-main}. Finally, in
Section~\ref{subsec:algorithmic-question} we address the algorithmic questions regarding the computations of
sails, and we show how to reconstruct (if possible) a surface from a
self-stressed framework in an affine plane not containing the origin.

\subsection{Multi-dimensional sails and continued fractions in the sense of
Klein}
\label{subsec:multi-dimensional}

Consider $n$ linearly independent vectors $x_1, \ldots, x_n \in \mathbb{R}^n$
and let $Ox_i = \{\lambda x_i \mid \lambda \geq 0\}$ denote the \emph{ray}
through $x_i$ and starting at $O$. A \emph{simplicial cone} $\mathcal{C}(x_1,
\ldots, x_n)$ in $\mathbb{R}^n$ is the convex hull of the rays $Ox_1, \ldots , Ox_n$.

\begin{definition}
  Consider a simplicial cone $\mathcal{C}$. The convex hull of all integer
  points (in the interior or on the boundary) of
  $\mathcal{C}$ except the origin, is called the \emph{A-hull}. The boundary of
  the A-hull of $\mathcal{C}$ is said to be the \emph{sail} of the cone (in the
  sense of Klein).
\end{definition}

\begin{center}
\begin{figure}[h]
  \begin{overpic}[width=.4\textwidth]{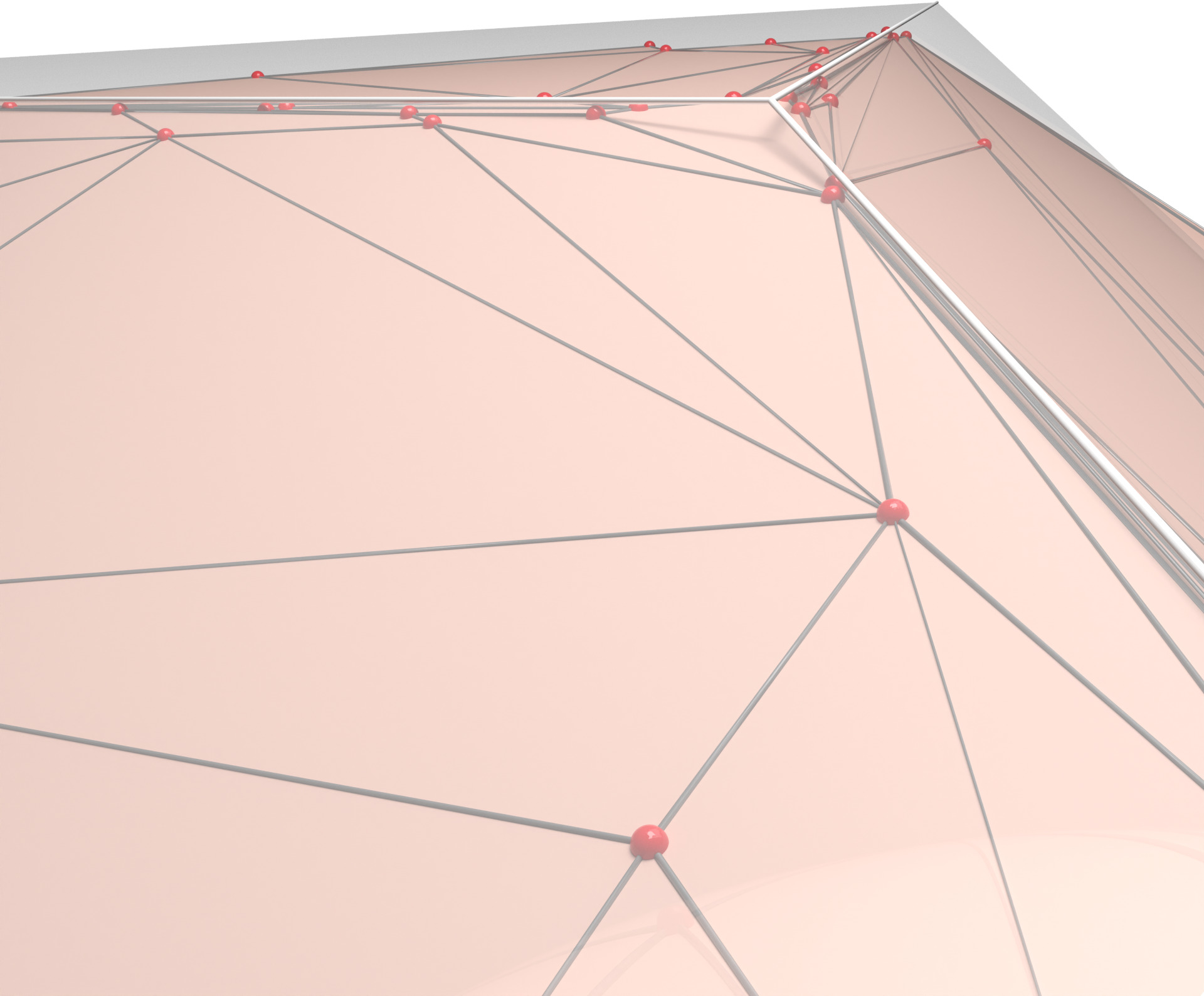}
    \put(79,82){\small$x_1$}
    \put(60,70){\small$O$}
  \end{overpic}
  \caption{An illustration of a sail. The simplicial cone
  $\mathcal{C}(x_1, x_2, x_3)$ has three faces and three edges (illustrated in
  white). The vertices of $\ZZ^3$ on the A-hull, the convex hull of 
  $\ZZ^3 \cap \mathcal{C}\setminus\{O\}$, are illustrated in red.}
\end{figure}
\end{center}

In the case where all $x_i$'s are rational vectors, their convex hull is a polyhedral surface with finitely many facets. In other cases, the boundary is unbounded and has infinitely many facets.

\begin{definition}
  Consider a matrix $A\in \GL(n,\mathbb{R})$ with distinct positive real eigenvalues.
  Then $A$ has $n$ distinct invariant one-dimensional eigenspaces $\Span(x_1),
  \ldots, \Span(x_n)$. Each eigenspace defines two rays $Ox_i$ and $O(-x_i)$.
  Taking every combination of them defines $2^n$ simplicial cones.
  The collections of the sails of these cones is said to be the \emph{continued
  fraction} for $A$ (in the sense of Klein).
\end{definition}

For the general theory of multidimensional continued fractions in the sense of Klein, we refer the reader to the books~\cite{Arn02} and~\cite{karpenkov-book}.

\subsection{Periodic sails}
\label{subsec:periodic-sails}

Let us describe a %remarkable 
class of simplicial cones and their algebraic
sails.

\begin{definition}
  In the case of $A\in \GL(n,\ZZ)$, the corresponding sails and continued
  fractions are called \emph{algebraic}. 
\end{definition}

The centralizer of $A$ in $\GL(n,\ZZ)$ is called the \emph{Dirichlet group} of
$A$ denoted by $\Xi(A)$. The positive Dirichlet group $\Xi_+(A)$ is the
multiplicative subgroup of $\Xi(A)$ consisting of all matrices whose
eigenvalues are positive real numbers. By the Dirichlet's unit theorem,
$\Xi_+(A)$ is isomorphic to $\ZZ^{n-1}$. This group preserves the
sails forming the continued fraction for $A$ and acts transitively on it. The
quotient of such a sail with respect to this group is
an $(n{-}1)$-dimensional torus. As a consequence, the combinatorial structure
of such an algebraic sail is $(n{-}1)$-periodic.
So we arrive at the following surprising result (for further discussions and
proofs see~\cite{karpenkov-book,BS66}).

\begin{theorem}
  Algebraic sails have a doubly-periodic combinatorial structure.
  In particular, their 1-skeletons are doubly-periodic.
\end{theorem}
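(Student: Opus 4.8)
The plan is to realise the positive Dirichlet group $\Xi_+(A)\cong\ZZ^{n-1}$ as a group of combinatorial automorphisms of a fixed sail that acts freely, properly discontinuously and cocompactly, so that the quotient is an $(n{-}1)$-torus; the combinatorial structure of the sail is then $(n{-}1)$-periodic, which for the planar case $n=3$ of interest is exactly double periodicity.

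First I would verify that $\Xi_+(A)$ genuinely acts on a single sail. Since $A$ has distinct eigenvalues, any $B$ commuting with $A$ preserves each one-dimensional eigenspace $\Span(x_i)$: from $A(Bx_i)=B(Ax_i)=\lambda_i(Bx_i)$ and $\dim\Span(x_i)=1$ we obtain $Bx_i=\mu_i x_i$ for scalars $\mu_i$. For $B\in\Xi_+(A)$ all $\mu_i$ are positive, so $B$ fixes each ray $Ox_i$ and therefore preserves the simplicial cone $\mathcal{C}=\mathcal{C}(x_1,\ldots,x_n)$. As $B\in\GL(n,\ZZ)$ it also preserves $\ZZ^n$, hence the set $\ZZ^n\cap\mathcal{C}\setminus\{O\}$, its convex hull (the A-hull) and so its boundary, the sail. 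Being linear, $B$ carries faces to faces and is thus a combinatorial automorphism.

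The core of the argument is the logarithmic picture behind Dirichlet's unit theorem. Passing to eigencoordinates $(c_1,\ldots,c_n)$, in which the open cone $\mathcal{C}^\circ$ is the positive orthant and $B$ acts diagonally by $(c_i)\mapsto(\mu_i c_i)$, the map $\mathrm{Log}(c_1,\ldots,c_n)=(\log c_1,\ldots,\log c_n)$ conjugates this action to translation by $(\log\mu_1,\ldots,\log\mu_n)$. Since $\prod_i\mu_i=\det B=\pm1$ and all $\mu_i>0$, one has $\prod_i\mu_i=1$, so these translation vectors lie in the hyperplane $H=\{t:\sum_i t_i=0\}\cong\mathbb{R}^{n-1}$; Dirichlet's unit theorem states precisely that they form a full-rank lattice $\Lambda\subset H$. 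In logarithmic coordinates the $\Xi_+(A)$-action is therefore translation by the cocompact lattice $\Lambda$.

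It remains to transfer freeness and cocompactness back to the sail, which I expect to be the main obstacle. Freeness: for an algebraic cone the proper faces of $\mathcal{C}$ are $A$-invariant subspaces spanned by eigenvectors with irrational coordinate ratios and hence contain no nonzero integer point, so every vertex of the sail lies in the open cone; there $Bp=p$ forces $\mu_i=1$ for all $i$ and thus $B=\Id$. Cocompactness: I would show that $\mathrm{Log}$ sends the sail to a hypersurface transverse to the $(1,\ldots,1)$-direction that lies within bounded distance of $H$---a consequence of the convexity of the A-hull together with the sail staying away from the boundary faces of $\mathcal{C}$---so that the $\Lambda$-action by translations of $H$ is cocompact and the quotient of the sail is compact. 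Since $\Lambda\cong\ZZ^{n-1}$ acts freely and properly discontinuously, the quotient is an $(n{-}1)$-torus carrying a finite induced cell decomposition; lifting back shows the combinatorial structure of the sail is invariant under $\ZZ^{n-1}$ with finite fundamental domain, i.e. $(n{-}1)$-periodic, and doubly periodic when $n=3$. The quantitative estimates underlying cocompactness are classical and may be found in \cite{karpenkov-book,BS66}.
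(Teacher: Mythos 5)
Your proposal takes essentially the same route as the paper: the paper likewise derives the periodicity from the action of the positive Dirichlet group $\Xi_+(A)\cong\ZZ^{n-1}$ (via Dirichlet's unit theorem) on the sail, with the quotient being an $(n{-}1)$-torus, and it defers the supporting details to \cite{karpenkov-book,BS66}. Your extra verifications (that $\Xi_+(A)$ preserves the sail and acts freely via the logarithmic picture) are correct and go beyond the paper's sketch, while your deferral of the cocompactness estimate to the same references matches exactly what the paper itself does.
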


\subsection{Klein-Arnold self-stressed frameworks and their periodicity in the
algebraic case}
\label{subsec:algebraically-periodic}

Let us define self-stresses obtained from projections of Klein sails. 

\begin{definition}
  Let $S(p)$ be a sail in $\mathbb{R}^3$ with vertices $p_i$. Consider a proper
  projection plane $\pi$ for $S(p)$. The self-stressed framework
  $G_S(\bar p)$ obtained as a projection of the sail to the plane $\pi$ is
  called the \emph{Klein-Arnold self-stressed framework} corresponding to
  $S(p)$.
  If $S(p)$ is an algebraic sail, we say that the corresponding self-stressed
  framework is \emph{algebraic}. 
\end{definition}

It turns out that algebraic Klein-Arnold self-stressed frameworks have a
certain type of periodicity, which we discuss in Theorem~\ref{thm:stresscoefprop}
below.

\begin{definition} 
  A point is \emph{rational} if all its coordinates are rational numbers.
  A framework is \emph{rational} if all its vertices are rational points.
  A self-stress is called \emph{rational/integer} if all its stress
  coefficients are rational/integer.
\end{definition}

\begin{remark}
  Note that not every rational infinite self-stress is proportional to an
  integer self-stress. For instance, let us consider a simple one-dimensional
  tensegrity with vertices $p_i = 2^i$ with $i \in \ZZ$ and edges $p_i p_{i + 1}$ connecting
  two consecutive vertices. Let stresses be given by
  $\omega_{i, i + 1} = 2^{-i}$. Then $\omega_{i, i + 1} (p_{i + 1} - p_i) = 1$
  for all $i$, hence $\omega$ is a self-stress. This self-stress is not integer
  as its stress coefficients are (completely reduced) rational numbers with
  denominators that are unbounded from above. 
\end{remark}

\subsubsection{Periodicity of algebraic tensegrities}

\begin{theorem} 
  \label{thm:stresscoefprop}
  Let $S(p)$ be an algebraic sail of some simplicial cone in $\mathbb{R}^3$, and let
  $\pi$ be a proper projection plane for $S(p)$.
  \begin{enumerate}
    \item\label{itm:sail-i} 
      The projective lifting coefficients of $\omega_S$ are rational numbers
      that simultaneously have the same sign.
    \item\label{itm:sail-ii}
      The projective lifting coefficients of $\omega_S$ are doubly periodic.
    \item\label{itm:sail-iii} 
      If $\pi$ is an integer plane, then the projection-framework $G_S(\bar p)$
      is rational and $\bar \omega_S$ is proportional to some integer
      self-stress.
  \end{enumerate}
\end{theorem}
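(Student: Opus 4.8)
The plan is to prove the three claims in order, relying on the structural results already established for algebraic sails and on Propositions~\ref{integer-propos} and~\ref{prop-does-not-depend}. The key geometric input is that an algebraic sail $S(p)$ has \emph{integer} vertices (its vertices are integer points of the cone, being vertices of the convex hull of $\ZZ^3\cap\mathcal{C}\setminus\{O\}$), and that its faces span integer planes that do not pass through the origin $O$ (since $O$ is excluded from the A-hull). This is precisely the setting of Proposition~\ref{integer-propos}.

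For claim~\eqref{itm:sail-i}, I would argue as follows. Each edge $p_ip_j$ of the sail is shared by exactly two faces, which span integer planes $\pi_1,\pi_2$ through the line $p_ip_j$; choosing $p_3\in\pi_1\setminus\pi_2$ and $p_4\in\pi_2\setminus\pi_1$ to be integer vertices of the sail, Proposition~\ref{integer-propos} gives
\begin{equation*}
  |\omega(p_i,p_j;p_3,p_4)|=\frac{\isin(\pi_1,\pi_2)}{\Il(p_i,p_j)\,\Id(O,\pi_1)\,\Id(O,\pi_2)},
\end{equation*}
a ratio of positive integers, hence a positive rational. By Proposition~\ref{prop-does-not-depend} this value is independent of the choice of $p_3,p_4$, so the projective lifting coefficients are well-defined rational numbers. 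The claim that they share a common sign follows from the convexity of the sail (noted in the \textbf{Remark} after Proposition~\ref{integer-propos}): all dihedral angles of a convex polyhedral surface are of the same type, so all $\omega_S$ carry the same sign.

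For claim~\eqref{itm:sail-ii}, I would use the action of the positive Dirichlet group $\Xi_+(A)\cong\ZZ^2$. Each $M\in\Xi_+(A)\subset\GL(3,\ZZ)$ preserves the sail and acts on its edges, mapping an edge $p_ip_j$ with adjacent face-planes $\pi_1,\pi_2$ to the edge $Mp_iMp_j$ with adjacent planes $M\pi_1,M\pi_2$. Since $M$ is an integer-linear isomorphism fixing $O$, all the integer invariants appearing in the formula---$\isin$, $\Il$, and $\Id(O,\cdot)$---are preserved, being defined purely in terms of sublattice indices. Hence $|\omega_S|$ is constant along $\Xi_+(A)$-orbits of edges; together with the same-sign conclusion of~\eqref{itm:sail-i}, this shows $\omega_S$ itself is invariant, i.e. doubly periodic with respect to the $\ZZ^2$-action.

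For claim~\eqref{itm:sail-iii}, suppose $\pi$ is an integer plane. The projection vertices are $\bar p_i=\Span(O,p_i)\cap\pi$; since $p_i$ and $\pi$ are both rational (integer), each intersection point $\bar p_i$ is rational, so $G_S(\bar p)$ is a rational framework. The projection-coefficients $\beta_i$, defined by $p_i-O=\beta_i(\bar p_i-O)$, are then rational as well. Since $\bar\omega_{ij}=\beta_i\beta_j\omega_{ij}$ and all three factors are rational, each $\bar\omega_{ij}$ is rational. The remaining point---that $\bar\omega_S$ is \emph{proportional} to an integer self-stress---is the main obstacle, because, as the preceding \textbf{Remark} warns, a rational infinite self-stress need not be proportional to an integer one when denominators are unbounded. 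Here periodicity rescues the argument: by~\eqref{itm:sail-ii} the values $\omega_{ij}$ take only finitely many values (one per $\Xi_+(A)$-orbit of edges, and there are finitely many such orbits since the quotient is a compact torus); similarly the finitely many orbit-representatives of the $\beta_i$ generate only finitely many distinct $\beta_i\beta_j$ up to the group action, provided $\pi$ is chosen compatibly. One must check that the $\beta_i$ transform by a common scalar under each group element---which follows since $M\in\GL(3,\ZZ)$ acts linearly and $\bar p_i$, $p_i$ scale consistently---so that the products $\bar\omega_{ij}$ repeat periodically and thus take only finitely many rational values. Clearing the common denominator of these finitely many values then yields an integer self-stress proportional to $\bar\omega_S$, completing the proof.
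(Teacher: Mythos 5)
Your treatment of parts \ref{itm:sail-i} and \ref{itm:sail-ii} is correct, and differs from the paper only in routing: the paper gets rationality directly from the integrality of the three determinants in Definition~\ref{def:stresscoef}, and gets periodicity from the invariance of those determinants under the (determinant-one) Dirichlet group action, whereas you go through the integer-geometry invariants of Proposition~\ref{integer-propos}; both are sound, since sublattice indices are indeed $\GL(3,\ZZ)$-invariant.

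Part \ref{itm:sail-iii}, however, contains a genuine error. You claim that ``the products $\bar\omega_{ij}$ repeat periodically and thus take only finitely many rational values,'' supported by the assertion that the $\beta_i$ transform by a common scalar under each Dirichlet group element. Neither claim is true. The Dirichlet group does not preserve the projection plane $\pi$, so the $\beta_i$ are not equivariant in any useful sense: an element $M$ of the positive Dirichlet group has three distinct eigenvalues, and if $a$ is the integer normal of $\pi$, the quantity $a\cdot(Mp_i)$ scales by a factor that depends on the direction of $p_i$ inside the cone, not by a scalar common to all vertices. Concretely, in Example~\ref{example1} the projection-stresses are $\bar\omega_{i(j+1),ij}=\tfrac12\beta_{i(j+1)}\beta_{ij}$ where $\beta_{ij}$ is the third coordinate of $M^iN^j(0,0,1)^\top$; these grow exponentially, so the $\bar\omega_{ij}$ take infinitely many values and are certainly not periodic. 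Only the projective lifting coefficients $\omega_{ij}$ are periodic --- that is exactly the content of \ref{itm:sail-ii}, and it does not transfer to $\bar\omega_{ij}$.

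What actually rescues \ref{itm:sail-iii} --- and what the paper's own (admittedly terse) proof relies on --- is boundedness of \emph{denominators}, not finiteness of \emph{values}. Write $\pi=\{x \mid a\cdot x=c\}$ with $a$ a primitive integer vector and $c$ a nonzero integer (possible since $\pi$ is an integer plane not containing $O$). From $p_i=\beta_i\bar p_i$ and $a\cdot\bar p_i=c$ one gets
\begin{equation*}
  \beta_i=\frac{a\cdot p_i}{c}\in\frac{1}{c}\,\ZZ,
  \qquad
  \bar p_i=\frac{c\,p_i}{a\cdot p_i}\in\QQ^3 .
\end{equation*}
By \ref{itm:sail-ii} the $\omega_{ij}$ take only finitely many rational values; let $D$ be a common denominator for them. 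Then every $\bar\omega_{ij}=\beta_i\beta_j\omega_{ij}$ lies in $\frac{1}{c^2D}\ZZ$, so $c^2D\,\bar\omega_S$ is an integer self-stress proportional to $\bar\omega_S$. Your instinct that periodicity must enter (to sidestep the paper's warning remark about unbounded denominators) was right, but it is needed only to bound the denominators of the $\omega_{ij}$; it must be combined with the uniform denominator bound on the $\beta_i$ above, not with a finiteness claim for the $\bar\omega_{ij}$, which is false.
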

\begin{proof}
  We first consider \ref{itm:sail-i}. All points of the sail are integer, so
  all the determinants in Definition~\ref{def:stresscoef} are integer as well.
  Therefore the projective lifting coefficients are rational.
  The signs of $\omega_S$ are consistent due to the convexity of the sail and
  its fixed orientation as defined. 

  As for \ref{itm:sail-ii}, note that the value of determinants is invariant
  under multiplication by elements of the Dirichlet group, which consists of
  $\SL(3,\ZZ)$ matrices, all of which have determinant one. 
  By Definition~\ref{def:stresscoef}, the projective lifting coefficients are
  ratios of three determinants, making them invariant under the action of the
  Dirichlet group.
  Since the corresponding positive Dirichlet group is isomporphic
  to $\ZZ^2$ we obtain double periodicity.

  Property~\ref{itm:sail-iii} follows from~\ref{itm:sail-ii} together with the fact
  that all the projection-coefficients $\beta_i$ are rational. 
\end{proof}

\subsection{Examples of periodic Klein-Arnold self-stressed frameworks}
\label{subsec:examples-main}

We start with the simplest example of two-dimensional continued fraction
which can be seen as a generalisation of the ``golden ratio'' to three-dimensions. It was independently first found by E.~Korkina~\cite{Kor95},
G.~Lachaud~\cite{lachaud}, A.D.~Bryuno and V.I.~Parusnikov~\cite{bryuno+}.
(It is also listed in~\cite[Ex.~22.9, dim $3$]{karpenkov-book}.) 

\begin{center}

\begin{figure}[t]
  \begin{minipage}{.57\textwidth}
    \begin{overpic}[width=\textwidth]{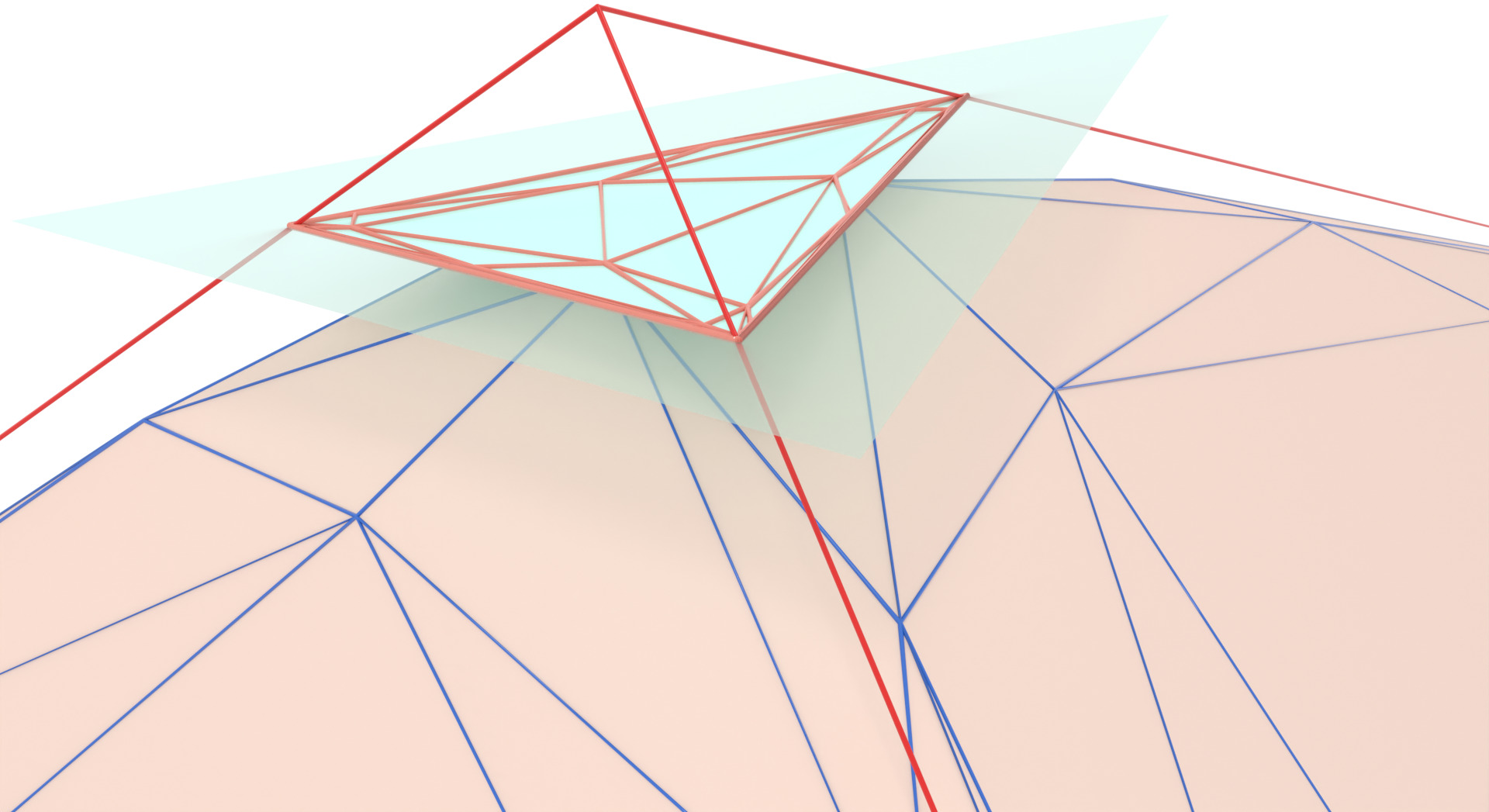}
      \put(40,55){\small$O$}
      \put(0,28){\small\rotatebox{38}{$\Span(x_1)$}}
      \put(58,13){\small\rotatebox{-63}{\contour{white}{$\Span(x_2)$}}}
      \put(85,44){\small\rotatebox{-13}{$\Span(x_3)$}}
      \put(2,3){\small\contour{white}{$G_S(p)$}}
      \put(71,50){\small\contour{white}{$\pi_1$}}
      \put(32,45){\small$G_S(\bar p)$}
    \end{overpic}
    \centerline{
    \begin{overpic}[width=.8\textwidth]{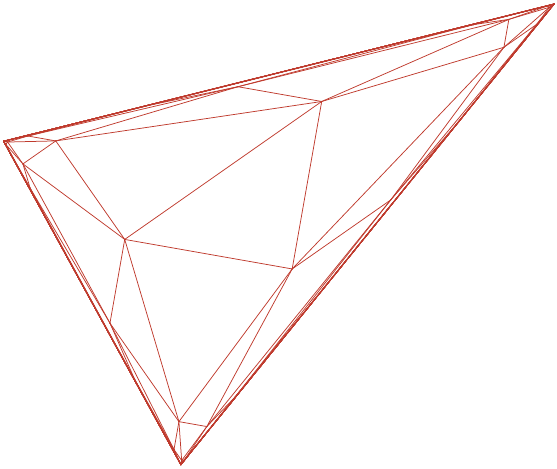}
      \put(32,45){\small$G_S(\bar p)$}
    \end{overpic}}
  \end{minipage}
  \hfill
  \begin{minipage}{.33\textwidth}
    \begin{overpic}[width=\textwidth]{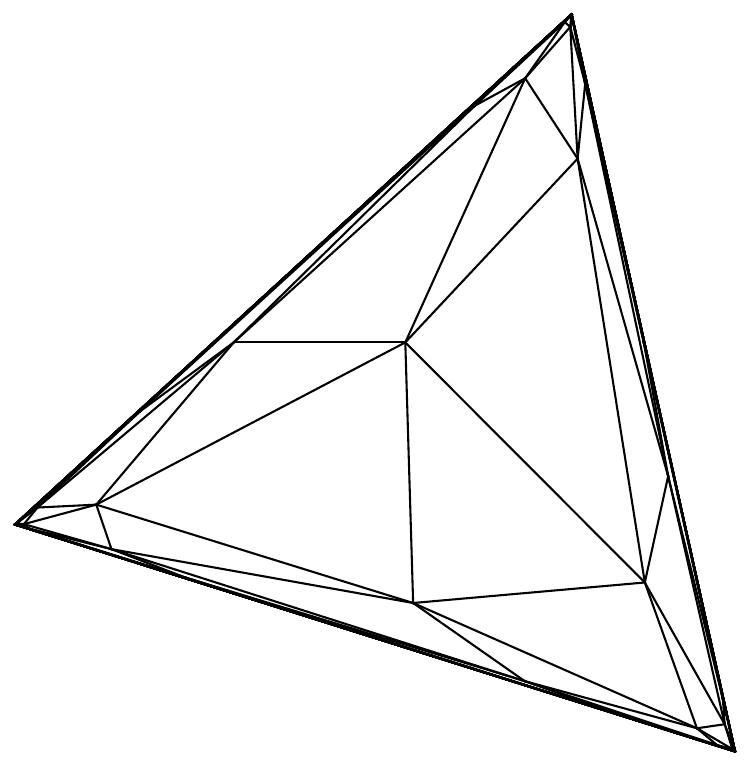}
    \end{overpic}
    \begin{overpic}[width=\textwidth]{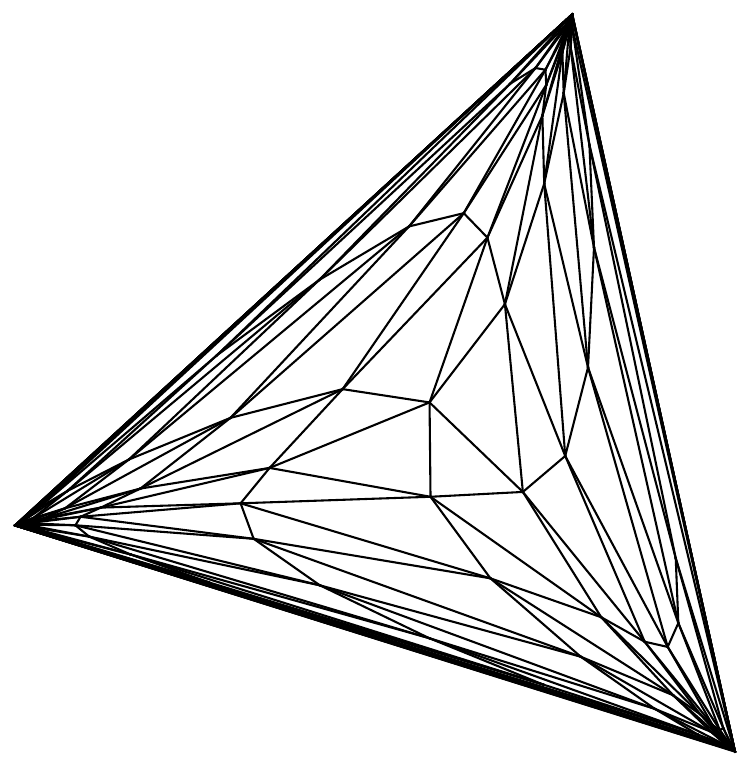}
    \end{overpic}
  \end{minipage}
  \caption{A sail $S(p)$ and its projection $G_S(\bar p)$ on the proper
  projection plane $\pi_1$ given by equation $z = 1$ \emph{(top-left)}. The
  centre of projection is $O = (0,0,0)$ and the sail is enclosed within
  the cone generated by the eigenspaces $\Span(x_1), \Span(x_2),
  \Span(x_3)$. 
  The resulting framework $G_S(\bar p)$ without projective distortion is
  depicted at \emph{bottom-left}. The triangle shape of the boundary is the
  intersection of the corresponding simplicial cone with $\pi_1$. Most edges of
  the framework $G_S(\bar p)$ accumulate along these triangle edges.
  Projecting the framework to another plane $\pi_2$, which is given by the
  normal vector $x_1/\|x_1\| + x_2/\|x_2\| + x_3/\|x_3\|$ (average of
  eigenvectors), is depicted at \emph{top-right}. Projecting to the same
  plane $\pi_2$ but from centre $(0,0,-2)$ results in the framework
  at \emph{bottom-right}. Here more of the edges that are accumulating along
  the triangle edges are visible.
  } 
\label{fig:sail-framework}
\end{figure}
\end{center}

\begin{example}
  \label{example1}
  Consider the matrix 
  $
    L =
    \begin{pmatrix}
      1 & 1 & 1 \\
      1 & 2 & 2 \\
      1 & 2 & 3
    \end{pmatrix}. 
  $
  Its positive Dirichlet group is isomorphic to $\ZZ^2$ and it is generated by
  the following two matrices:
  \[
    M=L^{-1}\cdot(L-I)^2 \quad \text{and}\quad N=L.
  \]
  Note that all the vertices of the sail are of the form
  $p_{ij}=M^iN^j(0,0,1)^\top$.
  The sail and its projection to the plane $z=1$ are shown in
  Figure~\ref{fig:sail-framework} (left).
  The projection of the sail to the plane with the normal vector formed by
  the arithmetic mean of the normalized eigenvectors of $L$ is depicted in 
  Figure~\ref{fig:sail-framework} (right) together with a projection from
  a different centre. % to see more details of the projected frameworkdf.
  Note that the stress coefficients computed below refer to the
  framework projected to plane $z=1$.

Let us describe in more details the  %torus decomposition, and its 
associated fundamental domain of one of the sails with respect to the action of the Dirichlet group $\Xi(M)$. As shown in Figure~\ref{fig:stresses} (centre), it has 
\begin{itemize}
  \item one vertex $p_{00}$;
  \item three edges: $p_{01}p_{00}$, $p_{01}p_{10}$, and $p_{01}p_{11}$;
  \item two triangular faces: $p_{01}p_{00}p_{10}$ and $p_{01}p_{10}p_{11}$.
\end{itemize}

\begin{center}

\begin{figure}[t]
  \begin{overpic}[width=1\textwidth]{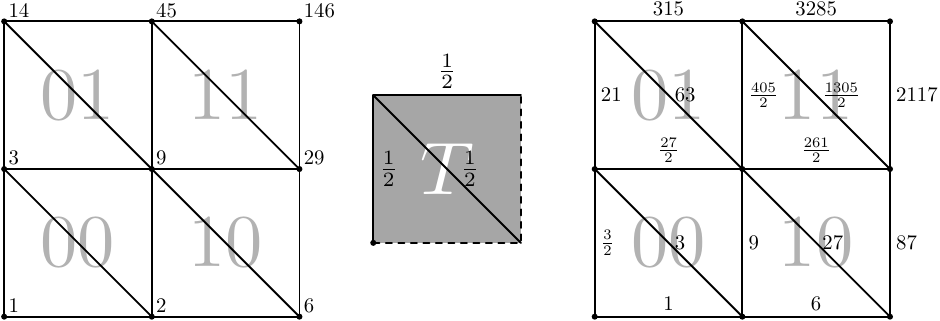}
    \put(38,6){$p_{00}$}
    \put(54,6){$p_{10}$}
    \put(38,25){$p_{01}$}
    \put(54,25){$p_{11}$}
  \end{overpic}
  \caption{Combinatorial view of Example~\ref{example1} (the actual
  geometry is depicted in Figure~\ref{fig:sail-framework}). The
  projection-coefficients $\beta_i$ are on the \emph{left}; the projective
  lifting coefficients $\omega_{ij}$ for a fundamental domain are in the
  \emph{centre}; the projection-stresses $\bar\omega_{ij}$ are on the
  \emph{right}.}
  \label{fig:stresses}
\end{figure}
\end{center}

Using the formulae of Definition~\ref{def:stresscoef} we get
\begin{equation*}
  \omega_{01,00}=\frac{1}{2},
  \qquad
  \omega_{01,10}=\frac{1}{2},
  \qquad
  \omega_{01,11}=\frac{1}{2}.
\end{equation*}
Finally, the projection-coefficient $\beta_{ij}$ is equal to the third
coordinate of $p_{ij}$. Then the projection-stresses for $\bar\omega$ are 
\begin{equation*}
  \begin{array}{l}
    \displaystyle
    \bar\omega_{i(j+1),ij}=\frac{1}{2}\beta_{i(j+1)}\beta_{ij};
    \\
    \displaystyle
    \bar\omega_{i(j+1),(i+1)j}=\frac{1}{2}\beta_{i(j+1)}\beta_{(i+1)j};\\
    \displaystyle
    \bar\omega_{i(j+1),(i+1)(j+1)}=\frac{1}{2}\beta_{i(j+1)}\beta_{(i+1)(j+1)}.
\end{array}
\end{equation*}
For small $i,j$ we have the values shown in Figure~\ref{fig:stresses}. In the centre part we show (combinatorially) a fundamental domain of the tensegrity. We write down the projective lifting coefficients $\omega_{ij}$ for the faces.
On the left and right sides of the figure, we further combinatorially illustrate several periods of the tensegrity corresponding to shifts by $\Id$, $M$, $N$, and $MN$. The exponents of these periods are indicated by large grey pairs of numbers. The left part displays the projection-coefficients
$\beta_i$, while the right part shows the projection stresses.
\end{example}

\begin{example}\label{example2}
  In this example we continue with a more complicated fundamental domain
  containing one pentagon and several triangles as faces
  (Parusnikov~\cite{Par95}, also listed in~\cite[Ex.~22.12]{karpenkov-book}). 
  Consider %a positive integer $a \geq 0$ and 
  the matrix 
  \[
    L =
    \begin{pmatrix}
      0 & 1 & \hphantom{-}0 \\
      0 & 0 & \hphantom{-}1 \\
      1 &  1 & -3
    \end{pmatrix}. 
  \] 
  Its positive Dirichlet group is isomorphic to $\ZZ^2$ and it is generated by
  the following two matrices:
  \[
    M=L^{-2} \quad \text{and}\quad N=M(3\Id-2L^{-1}).
  \]
  The projection of the sail to the plane $y=1$ is shown in
  Figure~\ref{fig:sail-framework2}.
Consider again $p_{ij,k}=M^iN^jp_{00,k}$  %$p_{ij,k}=M_a^iN_a^jp_{00,k}$ 
for $k=1,2,3$, where
\begin{equation*}
  %p1
  p_{00,1}=
  \begin{pmatrix}
    \hphantom{-}0\\
    \hphantom{-}2\\
    -5
  \end{pmatrix},
  \qquad
  %p7
  p_{00,2}=
  \begin{pmatrix}
    \hphantom{-}0\\
    \hphantom{-}1\\
    -2
  \end{pmatrix},
  %p6
  \quad
  \hbox{and}
  \quad
  p_{00,3}=
  \begin{pmatrix}
    -1\\
    \hphantom{-}1\\
    -1
  \end{pmatrix}.
\end{equation*}
Then, as depicted in Figure~\ref{fig: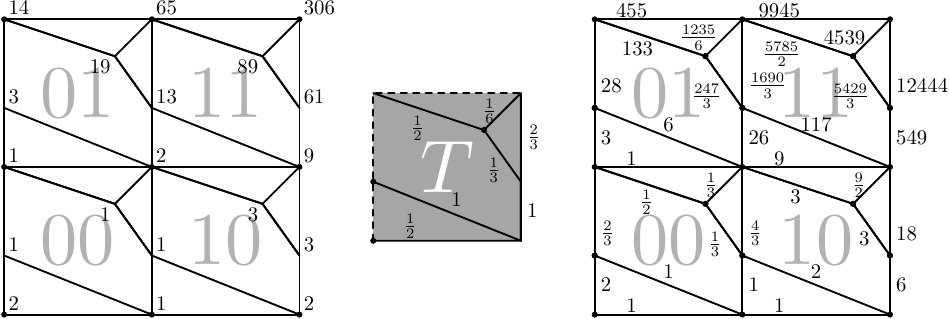} (centre), the fundamental domain of one of the sails is as follows:
\begin{itemize}
  \item three vertices $p_{00,1}$, $p_{00,2}$, $p_{00,3}$;
  \item seven edges:\ 
    $e_1 = p_{00,1}p_{10,1}$, \
    $e_2 = p_{00,2}p_{10,1}$, \
    $e_3 = p_{10,2}p_{10,1}$, \
    $e_4 = p_{10,2}p_{11,1}$,
    \\
    \noindent
    \hphantom{seven edges:}
    $e_5 = p_{11,1}p_{00,3}$, \
    $e_6 = p_{10,2}p_{00,3}$, \
    $e_7 = p_{01,1}p_{00,3}$;
  \item three triangular faces:
    $p_{00,1}p_{10,1}p_{00,2}$, 
    $p_{01,1}p_{00,3}p_{11,1}$,
    and $p_{00,3}p_{10,2}p_{11,1}$;
    \\
    one pentagonal face:
    $p_{10,1}p_{10,2}p_{00,3}p_{01,1}p_{00,2}$.
\end{itemize}

\begin{centering}
\begin{figure}[t]
\includegraphics[width=.25\textwidth]{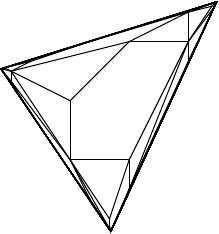}
\caption{The projection-framework $G_S(\tilde p)$ in Example~\ref{example2}.} 
\label{fig:sail-framework2}
\end{figure}
\end{centering}

\begin{center}
\begin{figure}[t]
  \begin{overpic}[width=1\textwidth]{stresses2.pdf}
    \put(37,6){$p_{00,1}$}
    \put(53,6){$p_{10,1}$}
    \put(37,25){$p_{01,1}$}
    \put(53,25){$p_{11,1}$}
    \put(34,14){$p_{00,2}$}
    \put(55.5,14){$p_{10,2}$}
    \put(47,18.5){\small$p_{00,3}$}
  \end{overpic}
  \caption{Example~\ref{example2}: the projection-coefficients $\beta_i$ are on the left; the projective lifting coefficients $\omega_{ij}$ for a fundamental domain are in the centre;
  the projection-stresses $\bar\omega_{ij}$ are on the right.}
  \label{fig:stresses2.pdf}
\end{figure}
\end{center}
Using the formulae of Definition~\ref{def:stresscoef}
we obtain
\begin{equation*}
\omega_1=\frac{1}{2},
\qquad
\omega_2=1,
\qquad
\omega_3=1,
\qquad
\omega_4=\frac{2}{3},
\qquad
\omega_5=\frac{1}{6},
\qquad
\omega_6=\frac{1}{3},
\qquad
\omega_7=\frac{1}{2}.
\end{equation*}
Finally, the value of $\bar \omega$ for any edge of the graph is obtained by multiplication of the corresponding $\omega_i$ with the $y$-coordinates of both ends of this edge (see Figure~\ref{fig:stresses2.pdf}).
\end{example}

\begin{example}
  As we showed in~\cite{Kar04}, the examples of such sails form entire
  families. Let us consider one such family (Korkina~\cite{Kor95}, also
  listed in~\cite[Example 22.14, case $a\ge 0$]{karpenkov-book}).
  Consider a positive integer $a \geq 0$ and the matrix 
  \[
    M_{a} =
    \begin{pmatrix}
      0 & 0 & 1 \\
      1 & 0 & -a-5 \\
      0 &  1 & \hphantom{-}a+6
    \end{pmatrix}. 
  \] 
  Its positive Dirichlet group is isomorphic to $\ZZ^2$ and it is generated by
  the following two matrices:
  \[
    M_{a} \quad \text{and}\quad N_a=M_{a}^{-1}\cdot(M_{a}-I)^2.
  \]
  Consider % again  
  $p_{ij}=M_a^iN_a^j(0,0,1)^\top$.
  Then one of the fundamental domains of the sails is as follows:
  \begin{itemize}
    \item one vertex $p_{00}$;
    \item three edges: $p_{01}p_{00}$, $p_{01}p_{10}$, and 
      $p_{01}p_{11}$;
    \item two triangular faces:
      $p_{01}p_{00}p_{10}$ and $p_{01}p_{10}p_{11}$.
  \end{itemize}
  Here we project to the affine plane $2x+y+z=1$.
  The combinatorial situation is similar to the one of
  Example~\ref{example1} (see Figure~\ref{fig:stresses} for the case $a=0$).
  Using the formulae of Definition~\ref{def:stresscoef} we get
  \begin{equation*}
    \omega_{01,00}=\frac{a+1}{a+2},
    \qquad
    \omega_{01,10}=\frac{1}{a+2},
    \qquad
    \omega_{01,11}=\frac{a^2+3a+1}{a+2}.
  \end{equation*}
  Finally, we obtain the projection-coefficients $\beta_{ij}$ by substituting
  the three components of $p_{ij}$ to the expression $2x+y+z$.
  Then the projection-stresses for $\bar\omega$ are
  \begin{equation*}
    \begin{array}{l}
      \displaystyle
      \bar\omega_{i(j+1),ij}=\frac{a+1}{a+2}\beta_{i(j+1)}\beta_{ij};
      \\
      \displaystyle
      \bar\omega_{i(j+1),(i+1)j}=\frac{1}{a+2}\beta_{i(j+1)}\beta_{(i+1)j};\\
      \displaystyle
      \bar\omega_{i(j+1),(i+1)(j+1)}
      =\frac{a^2+3a+1}{a+2}\beta_{i(j+1)}\beta_{(i+1)(j+1)}.
    \end{array}
  \end{equation*}
\end{example}

For further examples see e.g.~\cite{karpenkov-book}.

\begin{remark}
  If $S$ is an algebraic sail, then the corresponding affine algebraic
  tensegrity is contained in the triangle of the intersection of the projection
  plane with the cone. The accumulation points of the edges of the framework
  are the boundary points of the triangle. It is not known to the authors
  whether the accumulation points of the vertices are all boundary points of
  the triangle or not.
\end{remark}

\subsection{Algorithmic aspects}
\label{subsec:algorithmic-question}

Let us now discuss some algorithmic aspects of the construction of Klein-Arnold
tensegrities and on how to reconstruct surfaces from their projections.

\subsubsection{Construction of Klein-Arnold tensegrities}

We outline the procedure that we used in the examples above to obtain planar
tensegrities by projecting three-dimensional sails to a plane.
The main part of the present paper focuses on Part~2 below.

\noindent
\emph{Input:} 
We start with an integer matrix $A \in \GL(3,\ZZ)$.

\noindent
\emph{Part 1:} \emph{Constructing periodic sails.}
\begin{itemize}
  \item First, we construct the basis of the positive Dirichlet group of
    matrices commuting with $A$. They will be used to determine period
    shifts. Finding this basis is a hard question closely related to the
    Dirichlet's unit theorem. Useful techniques can be found
    in~\cite{Buc87,DF64}.
  \item Second, we determine a period of the sail. Several techniques for that
    are described in~\cite{Oka93,Shi76} (inductive algorithms) and
    in~\cite{Kar-const} (deductive algorithm); see also~\cite{karpenkov-book}.
\end{itemize}

\noindent
\emph{Part 2:} \emph{Projecting sails to an affine plane.}

\begin{itemize}
  \item Find a suitable proper projection plane $\pi$ and project the sail
    $S(p)$ to it. As a result we obtain a framework $G_S(\bar p)$.
  \item Compute the projective lifting coefficients $\omega_S$ for one period
    (see Definition~\ref{def:stresscoef} and Proposition~\ref{integer-propos}).
  \item Determine the projection-coefficients $\beta_i$ for the vertices of the
    resulting framework and compute the projection-stress $\bar\omega_S$ by
    following Definition~\ref{def-omega-bar}. Here it is sufficient to know all
    the vertices and edges of a fundamental domain of a continued fraction and
    the basis of the positive Dirichlet group. 
\end{itemize}

\noindent
\emph{Output:} 
The Klein-Arnold tensegrity $(G_S(\bar p), \bar\omega_S)$.

\subsubsection{Techniques to reconstruct surfaces from projections and
stresses}

Let us assume that we are given a self-stressed framework in the plane
which is the projection-framework of a polyhedral surface. The goal of the
following theorem is to describe how to recover explicitly the
corresponding polyhedral surface. We call that process \emph{the
projective lifting}.

\begin{theorem}
  \label{thm:mc}
  Let $(G_S(\bar p), \bar\omega_S)$ be a self-stressed framework in the
  plane which is the projection-framework of a polyhedral surface. Let us
  assume that any pair of distinct edges of a vertex star are not parallel, i.e.,
  $p_0 p_i \not\parallel p_0 p_j$ for $i \neq j$. Then
  there exists, up to the choice of one face, a unique projective
  lifting $S(p)$ whose projection-framework is $G_S(\bar p)$ with
  self-stress $\omega_S$.
\end{theorem}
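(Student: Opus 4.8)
The plan is to recast a projective lifting as an assignment of plane equations to the faces of the surface, to show that the prescribed projection-stress is exactly the jump of these equations across each edge, and then to reconstruct the faces by integrating these jumps, with the self-stress condition playing the role of the integrability (closing-up) condition. First I would encode planes by covectors: since a projective lifting must be projectively non-degenerate, each face $f$ lies in a plane not through $O$, which I write as $\Pi_f=\{x:\langle n_f,x\rangle=1\}$ for a covector $n_f\in(\mathbb{R}^3)^*$. As $p_i=\beta_i\bar p_i$ must lie on $\Pi_f$ for every face $f$ containing the vertex $i$, the projection-coefficient is forced to be $\beta_i=1/\langle n_f,\bar p_i\rangle$; consistency of this value across the faces around $i$ means that for two faces $f,g$ sharing the edge $\bar p_i\bar p_j$ the covector $n_f-n_g$ annihilates $\Span(\bar p_i,\bar p_j)$, hence $n_f-n_g=c_{fg}\,m_{ij}$ with $m_{ij}:=\bar p_i\times\bar p_j$ and a scalar $c_{fg}$. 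Thus a projective lifting of $G_S(\bar p)$ is the same datum as a consistent assignment $f\mapsto n_f$ whose edge-jumps are multiples of the $m_{ij}$.

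The key step is the identity $c_{fg}=\bar\omega_{ij}$, with the orientation of $m_{ij}$ fixed by the orientation of $S$. To prove it I would take $\omega_{ij}=\omega(p_i,p_j;p_k,p_l)$ with $p_k$ on $f$ and $p_l$ on $g$, expand the numerator $\det(p_j-p_i,p_k-p_i,p_l-p_i)$ as the alternating sum of the four $3\times3$ determinants on $\{p_i,p_j,p_k,p_l\}$, and use that $p_i,p_j,p_k$ lie on $\Pi_f$ (via Cramer's rule for $\langle n_f,p_l\rangle$) to rewrite it as $(\langle n_f,p_l\rangle-1)\det(p_i,p_j,p_k)$. Since $\langle n_f,p_l\rangle-1=\langle n_f-n_g,p_l\rangle=c_{fg}\det(p_i,p_j,p_l)/(\beta_i\beta_j)$, the two face-determinants cancel and one obtains $\omega_{ij}=c_{fg}/(\beta_i\beta_j)$, that is $\bar\omega_{ij}=\beta_i\beta_j\omega_{ij}=c_{fg}$. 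So the prescribed projection-stress literally is the jump datum; this is the converse reading of the computation underlying Theorem~\ref{thm:self-stress}.

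With this in hand the reconstruction becomes an integration. I would fix one reference face $f_0$ together with a plane $\Pi_{f_0}$, i.e. a covector $n_{f_0}$ (this is the ``choice of one face''), and define $n_f$ for every other face by summing the prescribed jumps $\bar\omega_{ij}m_{ij}$ along a path from $f_0$ to $f$ in the dual graph. This is well-defined precisely when the jump $1$-cochain is closed, i.e. sums to zero around every vertex of $G$, since the vertex-cycles generate the cycle space of the dual of a planar surface. Around a vertex $\bar p_i$ the telescoping sum of the oriented jumps equals $\bar p_i\times\sum_j\bar\omega_{ij}(\bar p_j-\bar p_i)$, which vanishes by the self-stress equation at $\bar p_i$; hence a consistent $\{n_f\}$ exists and is unique once $n_{f_0}$ is chosen. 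Setting $p_i=\bar p_i/\langle n_f,\bar p_i\rangle$ then recovers $S(p)$, and by the key identity its projection-stress is the given $\bar\omega_S$. The hypothesis that no two edges of a vertex star are parallel is used here to guarantee that consecutive edges around each vertex span honest $2$-planes, so that the lifted faces are non-degenerate and $\langle n_f,\bar p_i\rangle\neq0$, making $S(p)$ a genuine projectively non-degenerate polyhedral surface. For uniqueness, any two admissible liftings have, by the key identity, identical edge-jumps, so their covectors differ by a single global constant $\delta\in(\mathbb{R}^3)^*$; fixing the plane of the one chosen face forces $\delta=0$, which yields the asserted uniqueness.

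I expect the main obstacle to be the sign and orientation bookkeeping in the closing-up argument: verifying that the oriented jumps around a vertex genuinely combine into $\bar p_i\times\sum_j\bar\omega_{ij}(\bar p_j-\bar p_i)$, and checking that the self-stress condition is exactly the closedness needed for the covectors to integrate to a single-valued assignment (no monodromy, which for a simply connected planar surface is automatic once closedness holds). The non-degeneracy point enforced by the non-parallel hypothesis is the secondary technical issue, needed only to keep all $\beta_i$ finite and nonzero and the reconstructed faces two-dimensional.
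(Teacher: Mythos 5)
Your proposal is correct, but it takes a genuinely different route from the paper's proof. The paper reconstructs the lifting vertex-star by vertex-star: starting from the one given face, it solves the relation $\bar\omega_{02}=\beta_0\beta_2\omega_{02}$ explicitly for the next unknown height $\beta_3$, iterates to recover $p_4,\ldots,p_k$, and then invokes the self-stress condition at $\bar p_0$ together with the non-parallel hypothesis (linear independence of $p_1-p_0$ and $p_k-p_0$) to check that the two coefficients not used in the reconstruction, $\bar\omega_{01}$ and $\bar\omega_{0k}$, also come out right; global consistency is then dispatched by remarking that a nonzero monodromy would contradict the assumed existence of a lifting. You instead encode faces by plane equations $\langle n_f,x\rangle=1$, prove the jump identity $n_f-n_g=\bar\omega_{ij}\,(\bar p_i\times\bar p_j)$ across each edge (your Cramer's-rule rewriting of the numerator as $(\langle n_f,p_l\rangle-1)\det(p_i,p_j,p_k)$ is correct, and the sign bookkeeping you flag does work out under the left/right convention of Definition~\ref{oriented-def-stress}), and then integrate this cochain over the dual graph, observing that closedness around a primal vertex is \emph{equivalent} to the self-stress equation there: the vanishing of $\bar p_i\times\sum_j\bar\omega_{ij}(\bar p_j-\bar p_i)$ forces the sum itself to vanish, since the sum is parallel to $\pi$ while $\bar p_i$ is not. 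This buys you two things the paper's argument does not deliver: monodromy-freeness is \emph{derived} from the self-stress condition plus planarity (vertex cycles generate the dual cycle space), rather than ruled out by appeal to the existence hypothesis; and uniqueness is immediate, since equal jumps force the covectors of any two liftings to differ by a single global constant, killed by fixing one face. Conversely, the paper's route yields an explicit closed-form expression for each successive $\beta_i$, which is exactly what the algorithmic discussion in Section~\ref{subsec:algorithmic-question} needs, and it stays entirely within the coefficient formula of Definition~\ref{def:stresscoef}. One small inaccuracy in your writeup: the non-parallel hypothesis is not what guarantees $\langle n_f,\bar p_i\rangle\neq0$ --- that holds automatically for any genuine lifting, since $1/\langle n_f,\bar p_i\rangle=\beta_i$ is finite and nonzero because no vertex of a projectively non-degenerate surface sits at $O$. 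In fact your covector argument scarcely uses the hypothesis at all (only to ensure faces span honest $2$-planes), whereas in the paper's proof it is load-bearing, being precisely what pins down the two leftover coefficients $\bar\omega_{01}$ and $\bar\omega_{0k}$ at each vertex star.
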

\begin{proof}
  Let us consider a vertex $\bar p_0 \in G_S(\bar p)$ and all its incident
  edges $\bar p_0 \bar p_1, \ldots, \bar p_0 \bar p_k$. Let us assume that
  they are ordered in such a way that $\bar p_0, \bar p_i, \bar p_{i + 1}$
  belong to a face. 

  Since one face of $S(p)$ is given, we can assume that $p_0, p_1, p_2$
  are given in space. Now the question is whether we can recover $p_3$
  uniquely from what we are given. 

  We know that the projection-stress coefficient $\bar\omega_{02}$
  satisfies $\bar\omega_{02} = \beta_0 \beta_2 \omega_{02}$, where
  $\bar\omega_{02}$, $\beta_0$, $\beta_2$ are given. Inserting $p_3 =
  \beta_3 \bar p_3$ into the definition of the projective lifting
  coefficient yields
  \begin{equation*}
    \bar\omega_{02} 
    = 
    \beta_0 \beta_2 \omega_{02}
    = 
    \beta_0 \beta_2 
    \frac{\det(p_1 - p_0, p_2 - p_0, p_3 - p_0)}
    {\det(p_0, p_1, p_2) \det(p_0, p_2, p_3)}
    = 
    \beta_0 \beta_2 
    \frac{\det(p_1 - p_0, p_2 - p_0, \beta_3 \bar p_3 - p_0)}
    {\det(p_0, p_1, p_2) \det(p_0, p_2, \beta_3 \bar p_3)},
  \end{equation*}
  where $\beta_3$ is the only unknown. Hence we obtain
  \begin{equation*}
    \beta_3 
    = 
    \frac{\det(p_0, p_1, p_2)}
    {\det(p_1 - p_0, p_2 - p_0, \bar p_3) - \bar\omega_{02} \det(p_0, p_1,
    p_2) \det(\bar p_0, \bar p_1, \bar p_2)},
  \end{equation*}
  and with that $p_3 = \beta_3 \bar p_3$ is uniquely determined.
  We can continue and obtain $p_4, \ldots, p_k$ one by one. This lifted
  vertex star projects to the planar vertex star with edges $\bar p_0 \bar
  p_1, \ldots, \bar p_0 \bar p_k$. Furthermore, the corresponding
  projection-stress coefficients $\tilde\omega_{01}, \ldots,
  \tilde\omega_{0,k}$ coincide, except for $\tilde\omega_{01}$ and
  $\tilde\omega_{0, k - 1}$ with the given ones as we used them above to
  reconstruct $p_3, \ldots, p_k$. Thus we have 
  $\tilde\omega_{02} = \bar\omega_{02}, \ldots, 
  \tilde\omega_{0, k-1} = \bar\omega_{0, k-1}$. 
  But what about the two remaining ones, $\bar\omega_{01}$ and
  $\bar\omega_{0k}$?

  Since both frameworks are self-stressed we have 
  $\sum\bar\omega_{0i} (p_i - p_0) = 0 
  = \sum\tilde\omega_{0i} (p_i - p_0)$.
  Furthermore, since almost everything in these two equations are equal we
  obtain
  $\bar\omega_{01} (p_1 - p_0) + \bar\omega_{0k} (p_k - p_0) = 
  \tilde\omega_{01} (p_1 - p_0) + \tilde\omega_{0k} (p_k - p_0)$, hence
  $\tilde\omega_{01} = \bar\omega_{01}$
  and
  $\tilde\omega_{0k} = \bar\omega_{0k}$ 
  since 
  $p_1 - p_0$ and $p_k - p_0$ are linearly independent by assumption.

  Now we can iteratively reconstruct all the faces. Potentially we might
  have non-zero monodromies along closed paths of consequent faces, which
  will then imply that no lifting exists but this would contradict our
  assumption. Therefore, if a lifting exists, it is reconstructible. 
\end{proof}

\begin{remark}
  The situation here is similar to Maxwell-Cremona  liftings. Such
  liftings do not always exist. This is due to monodromy conditions on the
  face-paths in the graph. 
\end{remark}

\noindent{\bf Acknowledgements.} F.\,M.\ was partially supported by the FWO grants G0F5921N (Odysseus) and G023721N, the KU Leuven iBOF/23/064 grant, and the UiT Aurora MASCOT project.
C.\,M.\ was partially supported by the Austrian Science Fund (FWF) through
grant I~4868 (SFB-Transregio “Discretization in Geometry and Dynamics”). 
%\comment[id=CM]{I have copied the acknowledgements section from our last paper... Please check if it is still true for you!}

\bibliographystyle{abbrv}
\bibliography{tensegrity.bib}

\bigskip 

\noindent
\footnotesize \textbf{Authors' addresses:}

\bigskip

\noindent{Department of Mathematical Sciences, University of Liverpool,
%, Mathematical Sciences Building, Liverpool L69 7ZL, 
UK} 
\hfill \texttt{karpenk@liverpool.ac.uk}
\medskip  

\noindent{Departments of Mathematics and Computer Science, KU Leuven, Belgium} 
\hfill \texttt{fatemeh.mohammadi@kuleuven.be}
\medskip

\noindent{Institute of Discrete Mathematics and Geometry, TU Wien, Austria} 
\hfill \texttt{cmueller@geometrie.tuwien.ac.at}
\medskip  

\noindent{School of Mathematical Sciences, Lancaster University, Lancaster, UK} 
\hfill \texttt{b.schulze@lancaster.ac.uk}

\end{document}